\RequirePackage{fix-cm}
\documentclass[smallextended]{svjour3}       
\smartqed  

\usepackage{graphicx}
\usepackage{amsmath, ntheorem}
\usepackage{amssymb}
\usepackage{stackrel}
\usepackage{marvosym, enumerate}
\usepackage{amstext, amscd, latexsym}
\usepackage{amsfonts, ragged2e}
\usepackage{mathrsfs, pgfplots, enumerate, setspace}
\usepackage{subfigure}
\usepackage{cases}
\smartqed
\usepackage{cite}
\usepackage{amstext, graphicx, amscd, latexsym, amssymb}
\usepackage{amsfonts, ragged2e}
\usepackage{pgfplots, setspace}
\usepackage{geometry}
\usepackage{bm}
\usepackage{float}
\usepackage{epstopdf,caption}
\usepackage{mathtools}
\usepackage{amsfonts,amsmath,amsbsy,amssymb,amscd,mathrsfs}
\usepackage{graphicx}
\usepackage{epstopdf}
\usepackage{algorithmic} 
\usepackage{graphicx,epstopdf}

 \newcommand{\nm}[1]{\left\lVert {#1} \right\rVert}

\usepackage[figuresright]{rotating}
\usepackage[sort&compress,numbers]{natbib}
\usepackage[ruled,linesnumbered]{algorithm2e} 
\usepackage[colorlinks,linkcolor=blue,anchorcolor=blue,citecolor=blue,urlcolor=blue]{hyperref} 

\newtheorem{assumption}{Assumption}

\journalname{}

\begin{document}

\title{Preconditioned Proximal Gradient Methods with Conjugate Momentum: A Subspace Perspective}


\author{Jian Chen \and  Xinmin Yang}

\institute{J. Chen \at College of Mathematics, Sichuan University, Chengdu 610065, China\\
                    \href{mailto:chenjian_math@163.com}{chenjian\_math@163.com} \\
        \Letter X.M. Yang \at National Center for Applied Mathematics in Chongqing,  Chongqing Normal University, Chongqing 401331, China\\
        \href{mailto:xmyang@cqnu.edu.cn}{xmyang@cqnu.edu.cn}  \\}
\date{Received: date / Accepted: date}

\maketitle

\begin{abstract}
In this paper, we propose a descent method for composite optimization problems with linear operators. 
Specifically, we first design a structure-exploiting preconditioner tailored to the linear operator so that the resulting preconditioned proximal subproblem admits a closed-form solution through its dual formulation. 
However, such a structure-driven preconditioner may be poorly aligned with the local curvature of the smooth component, which can lead to slow practical convergence.
To address this issue, we develop a subspace proximal Newton framework that incorporates curvature information within a low-dimensional subspace. 
At each iteration, the search direction is obtained by minimizing a proximal Newton model restricted to a two-dimensional subspace spanned by the current preconditioned proximal gradient direction and a momentum direction derived from the previous iterate. 
By orthogonalizing the subspace basis with respect to the local Hessian-induced metric, the solution of the original coupled two-dimensional nonsmooth subproblem can be well approximated by solving two decoupled one-dimensional subproblems, while keeping the per-iteration computational cost low.
We establish global convergence of the proposed method and prove a $Q$-linear convergence rate under strong convexity. 
Comparative numerical experiments demonstrate the effectiveness of the proposed algorithm on ill-conditioned problems.

\keywords{Composite optimization \and Preconditioned proximal gradient method  \and Subspace method \and Conjugate momentum}
\subclass{65K05 \and 90C25 \and 90C30}
\end{abstract}

\section{Introduction}
\label{sec:intro}
Composite optimization problems arise in a wide range of applications, including machine learning, signal processing, and data science. A typical formulation is
\begin{equation*}
	\min_{x \in \mathbb{R}^n} F(x) = f(x) + g(x),
\end{equation*}
where $f: \mathbb{R}^{n} \rightarrow \mathbb{R}$ is a continuously differentiable function, $g: \mathbb{R}^{n} \rightarrow (-\infty,+\infty]$ is a proper, convex, and lower semicontinuous function but not necessarily differentiable. Due to their broad applicability, the design of efficient algorithms for such problems has attracted considerable attention.

In recent years, first-order methods have attracted much attention because of their low computational cost and scalability to large-scale problems. Representative approaches include the proximal gradient method \cite{N2013}, the iterative shrinkage-thresholding algorithm (ISTA) \cite{BT2009}, and various primal-dual methods \cite{CP2011,HY2012,MP2018}. These methods only require gradient information and proximal operators, making them attractive for high-dimensional applications. However, it is well known that the performance of first-order methods can deteriorate significantly when the problem is ill-conditioned, which is frequently encountered in practical applications.

To address this issue, second-order methods have been extensively studied. Classical proximal Newton-type methods \cite{LSS2014,ST2016} exploit curvature information to achieve faster local convergence. Nevertheless, the main difficulty of second-order approaches lies in the computational cost of evaluating or approximating Hessian matrices, which becomes prohibitive for large-scale problems. Besides, introducing non-diagonal preconditioning matrices typically destroys the closed-form property of proximal operators. As a consequence, many practical algorithms rely on diagonal preconditioning \cite{PDB2020,QGHY2025} to preserve computational efficiency. Although such approaches often yield improved practical performance, establishing theoretical guarantees comparable to those of vanilla proximal Newton and quasi-Newton methods is generally challenging.

Between first- and second-order methods lies another important class of optimization techniques: momentum methods, which exploit historical information to accelerate convergence. Classical frameworks of momentum methods include the heavy-ball method \cite{P1964}, Nesterov's accelerated gradient method \cite{N1983}, and the nonlinear conjugate gradient methods \cite{DY1999,FR1964,HS1952,PR1969}. Due to their low computational cost and strong empirical performance, momentum-based methods have become widely used in large-scale optimization problems.

Understanding why momentum methods perform well has attracted considerable attention in recent years. For instance, the heavy-ball method and Nesterov's accelerated gradient method have been studied and interpreted from the perspective of inertial dynamical systems \cite{ACFR2022,LC2022,SBC2016,SDSJ2022}. The nonlinear conjugate gradient method, on the other hand, originates from the conjugacy property of the linear conjugate gradient method and its finite termination property on quadratic problems. However, in large-scale or ill-conditioned settings, the conjugacy property may deteriorate, leading to degraded performance.

To mitigate this issue, Yuan and Stoer \cite{YS1995} proposed a subspace algorithm. The main idea is to construct a low-dimensional subspace using historical search directions together with the current gradient direction. Within this subspace, a second-order model is approximated using finite differences, and the optimal solution of the subspace problem is used to determine conjugate parameters. This strategy can be interpreted as a subspace optimality principle. When the basis and the approximation model are properly chosen, the resulting method can achieve fast asymptotic convergence comparable to full-space second-order algorithms \cite{ZGH2022}. As a result, subspace algorithms have attracted increasing attention. More recently, Lapucci et al. \cite{LLLS2026} established global convergence results for subspace methods in the nonconvex setting. This strategy has been extended to multi-objective \cite{CTY2025} and constrained optimization problems \cite{LLL2026}.

Given the strong empirical and theoretical performance of subspace algorithms in smooth optimization, a natural question arises:
\begin{center}\it
Can the subspace algorithm of Yuan and Stoer be extended to composite optimization problems?
\end{center}

In this paper, we address this question by studying a more general class of composite optimization problems with linear operators.
\begin{equation} \label{eq:main}
	\min_{x \in \mathbb{R}^n} F(x) = f(x) + g(Ax),
\end{equation}
where $f: \mathbb{R}^{n} \rightarrow \mathbb{R}$ is a continuously differentiable function, $g: \mathbb{R}^{m} \rightarrow (-\infty,+\infty]$ is a proper, convex, and lower semicontinuous function but not necessarily differentiable, and $A\in\mathbb{R}^{m\times n}$. Models of the form \eqref{eq:main} arise in a wide range of applications such as imaging
(especially total variation regularization), sparse recovery, and various linearly constrained
optimization problems. Extending subspace algorithms to problems of the form \eqref{eq:main} presents several fundamental challenges. First, the proximal operator of the composition $g \circ A$ does not admit a closed-form solution, which makes it difficult to construct efficient descent directions and the associated subspace. In particular, the choice of search directions is crucial for capturing useful curvature information and maintaining desirable convergence properties. Second, even when the search is restricted to a low-dimensional subspace, the resulting nonsmooth subproblem may still be difficult to solve efficiently. Designing computational strategies that balance approximation accuracy and computational efficiency therefore becomes a central challenge in extending subspace methods to composite optimization.

To address these challenges, this paper explores efficient strategies for subspace construction and fast algorithms for solving the subspace model. The main contributions are summarized as follows.
\begin{itemize}
	\item[$\bullet$] When the linear operator $A$ has full row rank, we exploit the singular value decomposition (SVD) of $A$ and complete the orthogonal eigenvectors of $A^{\top}A$ to construct a preconditioner $P$, which enables the preconditioned dual problem to admit a closed-form solution. In the linear case where $A$ is not of full row rank, we remove inactive constraints so that the remaining rows become full rank, allowing the same preconditioning strategy to be applied. We further investigate the applications of this preconditioning strategy, particularly in linearly constrained optimization problems. By exploiting the structure induced by the proposed preconditioner, we construct a closed-form preconditioned projection, which significantly simplifies the projection step that typically arises in projected gradient methods. As a result, the computational difficulty of evaluating projections onto the feasible set is greatly reduced \cite{C2016}. 
	\item[$\bullet$] The structure-driven preconditioner $P$ may severely misalign with the local curvature of the smooth
	term \(f\), leading to an ill-conditioned transformed Hessian $P^{-1/2}\nabla^{2}f(x)P^{-1/2}$ in the transformed problem. To alleviate this issue, motivated by the affine-invariant property of Newton's method, we introduce a subspace second-order model that improves the conditioning of the problem. Specifically, a local second-order approximation is constructed within a low-dimensional subspace using finite differences. To balance approximation accuracy and computational efficiency, we identify two conjugate directions associated with the local curvature within this subspace. Along each direction, a one-dimensional second-order model is constructed and solved efficiently. The resulting search direction consists of a preconditioned descent direction and its conjugate direction, which leads to a preconditioned proximal gradient method with conjugate momentum ($\mathrm{P^{2}GM}$\_CM).
	\item[$\bullet$]Numerical experiments demonstrate that the conjugate direction plays a crucial role in the performance of the proposed algorithm. The resulting method achieves competitive performance on several problems, including LASSO problems, linearly constrained optimization problems, and structured $\ell_{1}$ regularization problems.
	
\end{itemize}
 
\par The rest of this paper is organized as follows. 
Section \ref{sec2} introduces the notation and preliminary results that will be used throughout the paper. 
In Section \ref{sec3} we develop the preconditioned proximal gradient framework and describe the construction of the structure-exploiting preconditioner. 
Section \ref{sec4} discusses several important applications in which the proposed preconditioning strategy leads to closed-form dual updates. 
In Section \ref{sec5} we introduce the subspace acceleration mechanism and the conjugate momentum strategy, and derive the associated search directions. 
Section \ref{sec6} presents the complete algorithm together with efficient methods for solving the one-dimensional subproblems and establishes the global and linear convergence results. 
Numerical experiments demonstrating the efficiency of the proposed method are reported in Section \ref{sec7}.  Finally, some conclusions are drawn at the end of the paper.

\section{Preliminaries}\label{sec2}
Throughout this paper, the $n$-dimensional Euclidean space $\mathbb{R}^{n}$ is equipped with the inner product $\langle\cdot,\cdot\rangle$ and the induced norm $\|\cdot\|$. 
Denote by $\mathbb{S}^{n}_{++}$ ($\mathbb{S}^{n}_{+}$) the set of symmetric positive (semi-)definite matrices and by $\mathbb{O}^{n}$ the set of orthogonal matrices in $\mathbb{R}^{n\times n}$. 
The rank of a matrix is denoted by $\mathcal{R}(\cdot)$. 
For a differentiable function $f$, $\nabla f(x)\in\mathbb{R}^{n}$ and $\nabla^{2}f(x)\in\mathbb{R}^{n\times n}$ denote the gradient and the Hessian of $f$ at $x$, respectively. 
For a positive definite matrix $H$, we define the norm
\[
\|x\|_{H}=\sqrt{\langle x,Hx\rangle}.
\]
For simplicity, we denote $[n]:=\{1,2,\ldots,n\}$ and define the $n$-dimensional unit simplex by
\[
\Delta_{n}:=\left\{x\in\mathbb{R}^n:\sum_{i\in[n]}x_i=1,\ x_i\ge0\right\}.
\]
To avoid ambiguity, we introduce the partial order $\preceq(\prec)$ in $\mathbb{R}^n$ as
\[
u\preceq(\prec)v \iff v-u\in\mathbb{R}^n_{+}(\mathbb{R}^n_{++}),
\]
and in $\mathbb{S}^n$ as
\[
U\preceq(\prec)V \iff V-U\in\mathbb{S}^n_{+}(\mathbb{S}^n_{++}).
\]
For $b\in\mathbb{R}^m$, we denote the interval
\[
[-\infty,b]=[-\infty,b_1]\times\cdots\times[-\infty,b_m].
\]
For $a\in\mathbb{R}^n$, $r>0$, and $p\in[1,+\infty]$, the $\ell_p$ ball centered at $a$ with radius $r$ is defined as
\[
\mathrm{B}_p[a,r]:=\{x\in\mathbb{R}^n:\|x-a\|_p\le r\}.
\]
For a vector $x\in\mathbb{R}^{n}$ and an interval $[u,v]$ satisfying $v-u\in\mathbb{R}^n_{+}$, we define the componentwise clipping operator by
$$\mathtt{clip}\left(x,u,v\right):=(\min\{v_{i},\max\{u_{i},x_{i}\}\})_{i=1}^{n}.$$

For a proper extended real-valued function $h:\mathbb{R}^n\to(-\infty,+\infty]$, its domain is defined as
\[
\mathrm{dom}\,h:=\{x\in\mathbb{R}^n:h(x)<+\infty\}.
\]
The subdifferential of $h$ at $x\in\mathrm{dom}\,h$ is defined by
\[
\partial h(x):=\{v\in\mathbb{R}^n: h(y)\ge h(x)+\langle v,y-x\rangle,\ \forall y\in\mathbb{R}^n\}.
\]
The convex conjugate of $h$ is defined as
\[
h^*(y)=\sup_{x\in\mathbb{R}^n}\{\langle x,y\rangle-h(x)\}.
\]
The proximal operator associated with $h$ is defined by
\[
\mathrm{prox}_h(x)
=\arg\min_{u\in\mathbb{R}^n}
\left\{
h(u)+\frac12\|u-x\|^2
\right\}.
\]

Let $C\subseteq\mathbb{R}^n$ be a nonempty closed convex set. The normal cone of $C$ at $x\in C$ is defined by
\[
N_C(x):=\{v\in\mathbb{R}^n:\langle v,y-x\rangle\le0,\ \forall y\in C\}.
\]
The support function of $C$ is defined as
\[
\sigma_C(x):=\sup_{y\in C}\langle x,y\rangle,\quad x\in\mathbb{R}^n.
\]
The indicator function of $C$ is defined by
\[
\delta_C(x)=
\begin{cases}
	0, & x\in C,\\
	+\infty, & x\notin C.
\end{cases}
\]
For a symmetric positive definite matrix $H\in\mathbb{S}^n_{++}$ and a nonempty closed convex set $C\subseteq\mathbb{R}^n$, the preconditioned projection of $x$ onto $C$ is defined as
\[
\Pi_C^H(x):=\arg\min_{y\in C}\|y-x\|_H.
\]
It holds that $z=\Pi_C^H(x)$ if and only if
\[
H(x-z)\in N_C(z).
\]

A differentiable function $f$ is said to be $L$-smooth if
\[
\|\nabla f(x)-\nabla f(y)\|
\le L\|x-y\|,
\quad \forall x,y\in\mathbb{R}^n.
\]
This implies the descent inequality
\[
f(y)\le f(x)+\nabla f(x)^\top(y-x)
+\frac{L}{2}\|y-x\|^2.
\]
The function $f$ is $\mu$-strongly convex if
\[
f(y)\ge f(x)+\nabla f(x)^\top(y-x)
+\frac{\mu}{2}\|y-x\|^2
\]
for all $x,y\in\mathbb{R}^n$.

\section{Preconditioned proximal gradient method}\label{sec3}
In this section we develop a preconditioned proximal gradient framework for solving composite optimization problems of the form (\ref{eq:main}). In general, the proximal mapping of $g\circ A$ does not admit a closed-form expression, which may significantly increase the cost of each proximal gradient step. To address this difficulty, we introduce a preconditioning strategy that transforms the proximal gradient subproblem into a dual problem with a much simpler structure. 

For $P\in\mathbb{S}^{n}_{++}$,  consider the following preconditioned proximal gradient subproblem
\begin{equation}\label{prime}
	\min_{x\in\mathbb{R}^{n}}
	\nabla f(x^{k})^{\top}(x-x^{k})
	+
	g(Ax)
	+
	\frac{1}{2}\|x-x^{k}\|^{2}_{P}.
\end{equation}
To analyze this problem, we introduce its saddle-point formulation
$$\min_{x\in\mathbb{R}^{n}}\max_{y\in\mathbb{R}^{m}}\nabla f(x^{k})^{\top}(x-x^{k})+\frac{1}{2}\|x-x^{k}\|^{2}_{P}+y^{\top} Ax-g^{*}(y),$$
where $g^*$ is the convex conjugate of $g$. By the minimax theorem, the problem can be equivalently written as
\begin{align*}
\max_{y\in\mathbb{R}^{m}}\min_{x\in\mathbb{R}^{n}}\nabla f(x^{k})^{\top}(x-x^{k})+\frac{1}{2}\|x-x^{k}\|^{2}_{P}+y^{\top}Ax-g^{*}(y).
\end{align*}
Let $\tilde{x}^{k}$ denote the minimizer of problem \eqref{prime}. 
The optimality condition with respect to $x$ yields
\begin{equation}\label{x}
	\tilde{x}^{k}=x^{k}-P^{-1}(\nabla f(x^{k})+A^{\top}y^{k}),
\end{equation}
where $y^{k}$ is the optimal solution of the following dual problem:
\begin{equation}\label{dual}
	\min_{y\in\mathbb{R}^{m}}\frac{1}{2}y^{\top}AP^{-1}A^{\top}y+g^*(y) -{a^{k\top}}y,
\end{equation}
with $$a^{k}:=Ax^{k}-AP^{-1}\nabla f(x^{k}).$$
\par The main motivation behind the preconditioned proximal gradient method is to simplify the dual subproblem through a proper choice of the preconditioner $P$. In particular, if $P$ is chosen such that 
\begin{equation}\label{inv}
	AP^{-1}A^{\top}=\bm I_{m},
\end{equation}
then the dual problem \eqref{dual} reduces to
\[
\min_{y\in\mathbb{R}^{m}}
\frac12\|y\|^2 + g^*(y) - a^{k\top}y,
\]
whose solution is simply
$$y^{k}=\mathrm{prox}_{g^*}(a^{k}).$$
Therefore, the key question becomes how to construct a suitable preconditioner $P$ that satisfies condition \eqref{inv} while remaining computationally tractable.
\subsection{Selection of $P$}
We now describe a systematic way to construct a preconditioner satisfying condition \eqref{inv}. The construction relies on the SVD of the matrix $A$.

Let the SVD of $A$ be
$$A = U\Lambda V^{\top},$$
where $U\in\mathbb{O}^{m}$ and $V\in\mathbb{O}^{n}$. Then
 $$A^{\top}A=V\Lambda^{\top}\Lambda V^{\top}.$$
Based on this decomposition, we select the preconditioner $P$ as
\begin{equation}\label{p}
	P=\left\{
	\begin{aligned}
		&A^{\top}A, & \mathcal{R}(A)&=m= n, \\
		&A^{\top}A+ V\begin{bmatrix}
			\bm0_{m\times m}   & &  \\
			
			&  &\tilde{P}
		\end{bmatrix}V^{\top}, &  \mathcal{R}(A)&=m<n,
	\end{aligned}
	\right.
\end{equation}
where $\tilde{P}\in\mathbb{S}^{n-m}_{++}$. 
To further reveal the structure of the preconditioner, define
\begin{equation}\label{m}
	M=\left\{
	\begin{aligned}
		&A, & \mathcal{R}(A)&=m= n, \\
		&\begin{bmatrix}
			&A  \\
			&\bm0_{(n-m)\times n}
		\end{bmatrix}+ \begin{bmatrix}
			\bm0_{m\times m}   & &  \\
			
			&  &\tilde{P}^{1/2}
		\end{bmatrix}V^{\top}, &  \mathcal{R}(A)&=m<n.
	\end{aligned}
	\right.
\end{equation}
With this construction, the preconditioner can be written as
 $$P = M^{\top}M.$$
Consequently, $P \in \mathbb{S}^{n}_{++}$ and satisfies condition \eqref{inv}. 
This choice of $P$ ensures that the dual subproblem admits a closed-form solution, which significantly simplifies the computation of the proximal gradient step.
\subsection{Diagonal preconditioning}
A main appeal of iteration \eqref{prime} is that, for suitable choices of $P$, the subproblem \eqref{dual} admits a closed-form solution, making each iteration computationally efficient. It is worth noting, however, that such a choice of $P$ is tightly coupled with the linear operator $A$ in order to guarantee $AP^{-1}A^{\top}=I_{m}$. While this property greatly simplifies the dual update, it may introduce a potential drawback.

Specifically, since $P$ is primarily designed to accommodate the structure of $A$, it may be poorly aligned with the local second-order geometry of $f$. When the spectral structure of $A$ differs substantially from that of the local curvature $\nabla^{2} f(x^{k})$, the induced metric $\|\cdot\|_{P}$ may impose an inappropriate scaling across different directions. This mismatch may lead to additional ill-conditioning and thus slow down the convergence of the overall algorithm, even though the subproblem itself remains easy to solve.

To strike a balance between per-iteration computational cost and improved curvature exploration, we propose a diagonal preconditioning strategy. Instead of fixing $P$, we allow a variable preconditioner $P=P_k$ in \eqref{dual} such that the quadratic term in the dual subproblem becomes diagonal; equivalently, we enforce that $AP_k^{-1}A^{\top}$ is a diagonal full-rank matrix. This diagonal structure decouples the dual variables and allows direction-wise scaling to be adjusted independently. As a result, the preconditioner $P_k$ can better capture the local curvature information while preserving the computational simplicity of the update. The remaining question is how to construct such a preconditioner $P_k$.

To obtain a diagonal but not necessarily identity matrix in the dual
quadratic term, we first construct an $A$-adapted change of variables.
Assume that $A$ has full row rank, i.e., $\operatorname{rank}(A)=m\le n$,
and let
\[
A=U[\Sigma\ \ 0]V^\top
\]
be its singular value decomposition, where $\Sigma\in\mathbb R^{m\times m}$
is nonsingular. The matrix $M$ defined in (\ref{m}) can be written as
\[
M=
\begin{bmatrix}
	U\Sigma & 0\\
	0 & \widetilde P^{1/2}
\end{bmatrix}
V^\top .
\]
Then
\[
AM^{-1}=[I_m\ \ 0].
\]

Let
\[
\Gamma_k=\operatorname{diag}(\gamma_1^k,\ldots,\gamma_m^k)
\in\mathbb S^m_{++}
\]
be a diagonal scaling matrix, and let
\[
E_k\in\mathbb S^{n-m}_{++}
\]
be diagonal. We define
\[
D_k=
\begin{bmatrix}
	\Gamma_k^{-1} & 0\\
	0 & E_k
\end{bmatrix},
\qquad
P_k=M^\top D_k M .
\]
Then
\[
AP_k^{-1}A^\top
=
[I_m\ \ 0]D_k^{-1}
\begin{bmatrix}
	I_m\\
	0
\end{bmatrix}
=
\Gamma_k .
\]
Thus the dual quadratic term is diagonal with a prescribed positive
diagonal matrix $\Gamma_k$, rather than the identity matrix.

From a preconditioning perspective, the matrix $P=M^{\top}M$ in \eqref{prime} can be interpreted as inducing a change of variables $y=Mx$. Denote
\[
h(y) = f(x) = f(M^{-1}y).
\]
We then apply a diagonal preconditioning matrix $D_k$ to the function $h$ in order to better capture its local geometry. The diagonal matrix $D_k$ can be constructed using various diagonal preconditioning strategies, such as the diagonal Barzilai--Borwein method \cite{PDB2020}. 

\section{Application}\label{sec4}
In this section we illustrate how the proposed preconditioning strategy can significantly simplify the computation of the dual subproblem for several important classes of composite optimization problems. 
\subsection{Ellipsoidal constrained problems}
Consider the ellipsoidal constrained optimization problem:
\begin{align*}
	\min&~~~f(x)         \\
	{\rm s.t.}&~~~x^{\top}Bx\leq b,
\end{align*}
where $B\in\mathbb{S}^{n}_{+}$ and $\mathcal{R}(B)=m$. Let $A$ be a matrix satisfying $A^{\top}A = B$. Then the constraint can be written in the form
\[
g(Ax) = \delta_{\mathrm{B}_2[\bm 0_m,\sqrt{b}]}(Ax),
\]
where $\mathrm{B}_2[\bm 0_m,\sqrt{b}]$ denotes the Euclidean ball with radius $\sqrt{b}$. The conjugate function of $g$ is
\begin{equation}
	g^*(y)=\sigma_{\mathrm{B}_{2}[\bm0_{m},\sqrt{b}]}(y)=\max_{c\in\mathrm{B}_{2}[\bm0_{m},\sqrt{b}]}c^{\top}y.
\end{equation}
Applying the preconditioner $P_{k}=M^{\top}D_{k}M$, the resulting dual problem becomes
$$
\min_{y\in \mathbb{R}^{m}}\max_{c\in\mathrm{B}_{2}[\bm0_{m},\sqrt{b}]}
\frac{1}{2}\|y\|_{\Gamma_k}^{2}+c^{\top}y-(a^{k})^{\top}y.
$$
By minimax theorem, it is equivalent to
$$
\max_{c\in\mathrm{B}_{2}[\bm0_{m},\sqrt{b}]}\min_{y\in \mathbb{R}^{m}}
\frac{1}{2}\|y\|_{\Gamma_k}^{2}+c^{\top}y-(a^{k})^{\top}y.
$$
Therefore, by the optimality condition of the minimization problem, we obtain
$$y^{k}=\Gamma_k^{-1}\left(a^{k}-c^*\right),$$
where $c^*$ is the unique optimal solution of the following dual problem:
\begin{align*}
	-\min_{c\in\mathrm{B}_{2}[\bm0_{m},\sqrt{b}]}\frac{1}{2}\nm{a^{k}-c}^{2}_{\Gamma_k^{-1}}.
\end{align*}
The associated Lagrangian is given by
$$
\mathcal{L}(c,\xi)
=
\frac{1}{2}\nm{a^{k}-c}^{2}_{\Gamma_k^{-1}}
+\xi\left(\frac{1}{2}\|c\|^{2}-\frac{b}{2}\right),
$$
where $\xi\geq 0$. By the KKT conditions, the solution $c^*$ satisfies
$$
\xi_k\Gamma_k c^*+ c^*=a^{k},
$$
together with the complementarity condition
$$
\xi_k\left(\frac{1}{2}\|c^*\|^{2}-\frac{b}{2}\right)=0.
$$

If the unconstrained minimizer is strictly feasible, namely,
$$
\|a^{k}\|< \sqrt{b},
$$
then $\xi_k=0$ and hence
$$
c^*=a^{k}.
$$
Otherwise, the ball constraint is active, that is,
$$
\|c^*\|=\sqrt{b},
$$
and
$$
c^*=(\xi_k \Gamma_k+I_m)^{-1}a^{k}
$$
for some $\xi_k>0$. Since
$$
\Gamma_k=\operatorname{diag}(\gamma_1^k,\ldots,\gamma_m^k)
\in\mathbb S^m_{++},
$$
the multiplier $\xi_k$ is determined by the scalar nonlinear equation
$$
\sum_{i=1}^{m}
\left(
\frac{a_i^{k}}{\xi_k\gamma_i^{k}+1}
\right)^2
=
b.
$$
This equation is monotone in $\xi_k$ and can be efficiently solved by Newton's method.

In particular, if $\gamma_1^k=\cdots=\gamma_m^k=\gamma^k$, then the active-case solution reduces to
$$
c^*=
\sqrt{b}\frac{a^{k}}{\|a^{k}\|}.
$$
Therefore, in this special case,
$$
c^*
=
\begin{cases}
	{a^{k}}, 
	& \text{if } \|a^{k}\|< \sqrt{b}, \\[2ex]
	\sqrt{b}\dfrac{a^{k}}{\|a^{k}\|}, 
	& \text{otherwise}.
\end{cases}
$$

\subsection{Structured $\ell_{1}$ regularization problems}
Next we consider a structured $\ell_1$ regularization problem:
\begin{align*}
	\min\limits_{x\in\mathbb{R}^{n}}f(x)  +\lambda \|Ax\|_{1},      
\end{align*}
where $\lambda>0$, $A\in\mathbb{R}^{m\times n}$ and $\mathcal{R}(A)=m$. In this case
\[
g(Ax) = \lambda \|Ax\|_1.
\]
The conjugate function is
\begin{equation}
	g^*(y)=\delta_{\mathrm{B}_{\infty}[\bm0_{m},\lambda]}(y)=\left\{
	\begin{aligned}
		&0, & y\in \mathrm{B}_{\infty}[\bm0_{m},\lambda], \\
		&+\infty, &  \textrm{otherwise}.
	\end{aligned}
	\right.
\end{equation}
Using the preconditioner $P_k = M^\top D_k M$, the dual problem becomes
$$	\min_{y\in \mathrm{B}_{\infty}[\bm0_{m},\lambda]}\frac{1}{2}\nm{y}_{\Gamma_k}^{2} -a^{k\top}y.$$
The optimality condition leads to the explicit solution
\begin{align*}
	y^{k}=\mathrm{\Pi}_{\mathrm{B}_{\infty}[\bm0_{m},\lambda]}(\Gamma_k^{-1}a^{k}).
\end{align*}
Hence the dual update reduces to a projection onto an $\ell_\infty$ ball, which is equivalent to a simple componentwise clipping operation.
\subsection{Linear constrained optimization problems}
When the linear operator $A$ is not of full row rank, the preconditioning matrix can be constructed using the linearly independent components of $A$. This observation is particularly useful for linear constrained optimization problems, where redundant or inactive constraints can be removed so that the remaining constraint matrix becomes full row rank. Let us consider the linear constrained optimization problem:
\begin{align*}
	&\min f(x)         \\
	&{\rm s.t.}~c_{l}\preceq Bx\preceq c_{u},\\
	 &~~~~~Cx=c_{e},
\end{align*}
where $B\in\mathbb{R}^{p\times n}$ and $C\in\mathbb{R}^{q\times n}$. At iteration $k$, we define the working constraint matrix
\[
A_k =
\begin{bmatrix}
	B_k \\
	C
\end{bmatrix},
\]
where $B_k$ consists of the selected inequality constraints.  The corresponding nonsmooth term is
\[
g_k(A_k x) = \delta_{[c_{l},c_u] \times \{c_{e}\}}(A_k x).
\]
The conjugate function of $g_k$ is given by \begin{equation}\label{conju}
	g_{k}^*(y)=\sigma_{[c_{l},c_u]\times \{c_{e}\} }(y)=\max_{c\in[c_{l},c_u]} c^{\top}y_{[1:p_{k}]}+c_{e}^{\top}y_{[p_{k}+1,p_{k}+q]}
\end{equation}
Using the preconditioner $P_{k}=M_{k}^{\top}D_{k}M_{k}$, the dual problem becomes:
$$	\min_{y\in\mathbb{R}^{p_{k}+q}}\max_{c\in[c_{l},c_u]}\frac{1}{2}\nm{y}_{\Gamma_k}^{2}+c^{\top}y_{[1:p_{k}]}+c_{e}^{\top}y_{[p_{k}+1,p_{k}+q]} -a^{k\top}y.$$
By minimax theorem, it is equivalent to
$$	\max_{c\in[c_{l},c_u]}\min_{y\in\mathbb{R}^{p_{k}+q}}\frac{1}{2}\nm{y}_{\Gamma_k}^{2}+c^{\top}y_{[1:p_{k}]}+c_{e}^{\top}y_{[p_{k}+1,p_{k}+q]} -a^{k\top}y.$$
Therefore, by the optimality condition of the minimization problem, we obtain
$$y^{k}=\Gamma_k^{-1}\left(\left\{a^{k}_{[1,p_{k}]}-c^*\right\}\times\left\{a^{k}_{[p_{k}+1,p_{k}+q]}-c_{e}\right\}\right),$$
where $c^*$ is the unique optimal solution of the following dual problem:
\begin{align*}
	-\min_{c\in[c_{l},c_u]}\frac{1}{2}\nm{a^{k}_{[1,p_{k}]}-c}^{2}_{\Gamma_k^{-1}}.
\end{align*}
Since $\Gamma_k$ is diagonal, it follows that
$$y^{k}=\Gamma_k^{-1}\left(\left\{a^{k}_{[1,p_{k}]}-\mathtt{clip}\left(a^{k}_{[1,p_{k}]},c_{l},c_{u}\right)\right\}\times\left\{a^{k}_{[p_{k}+1,p_{k}+q]}-c_{e}\right\}\right).$$
The key ingredient for applying the proposed framework to linear constrained optimization problems lies in identifying the working matrix $A_k$. In contrast to classical active-set methods, which attempt to precisely identify the active constraints, our approach only requires excluding several clearly inactive constraints. In many practical situations, removing inactive indices is significantly easier than accurately detecting the full active set. 

\subsubsection{Simplex constrained optimization problems}
Consider the simplex constrained optimization problem:
\begin{align*}
	\min&~~~f(x)         \\
	{\rm s.t.}&~~~\bm0_{n}\preceq x,\\
	&~~~\bm 1_{n}^{\top}x=1.
\end{align*}
In this case, $A_{k}$ is constructed from the identity matrix by replacing the row corresponding to the active dual index $i_k$ with the vector $\bm 1_n^{\top}$. More precisely,
$$A_{k}=\bm I_{n} +e_{i_{k}}(\bm1_{n}-e_{i_{k}})^{\top},$$
and 
$$g_{k}(A_{k}x)=\delta_{[\bm0_{i_{k}-1},+\infty]\times \{1\} \times [\bm0_{n-i_{k}},+\infty]}(A_{k}x).$$
The conjugate function is
\begin{equation}
	g_{k}^*(y)=\sigma_{[\bm0_{i_{k}-1},+\infty]\times \{1\} \times [\bm0_{n-i_{k}},+\infty]}(y).
\end{equation}
Since $A_{k}$ is a rank-one update of the identity matrix, the Sherman--Morrison formula yields the closed-form expression 
\[
A_{k}^{-1}
= \left(\bm I_n + e_{i_{k}}(\bm1_{n}-e_{i_{k}})^{\top}\right)^{-1}
= \bm I_n - \frac{e_{i_{k}}(\bm1_{n}-e_{i_{k}})^{\top}}{1+(\bm1_{n}-e_{i_{k}})^{\top}e_{i_{k}}}=\bm I_n - e_{i_{k}}(\bm1_{n}-e_{i_{k}})^{\top}.
\]
Substituting $c_{l}=\bm 0_{n-1}$, $c_{u}=\{+\infty\}^{n-1}$ and $c_{e}=1$ into (\ref{conju}) and
using the preconditioner $P_{k}=A_{k}^{\top}D_{k}A_{k}$, the optimality condition gives
$$y^{k}=\Gamma_k^{-1}\left(a^{k}_{[1:i_{k}-1]}-\left[a^{k}_{[1:i_{k}-1]}\right]_{+}\times\{a^{k}_{i_{k}}-1\}\times\left\{a^{k}_{[i_{k}+1:n]}-\left[a^{k}_{[i_{k}+1:n]}\right]_{+}\right\}\right).$$

\subsubsection{Capped simplex constrained optimization problems}
Next, consider the capped simplex constrained optimization problem with parameter $s>0$:
\begin{align*}
	\min&~~~f(x)         \\
	{\rm s.t.}&~~~\bm0_{n}\preceq x\preceq\bm1_{n},\\
	&~~~\bm 1_{n}^{\top}x\leq s.
\end{align*}
In this case, $A_{k}$ is constructed from the identity matrix by replacing the row corresponding to the active dual index $i_k$ with the vector $\bm 1_n^{\top}$. More precisely, 
$$A_{k}=\bm I_{n} +e_{i_{k}}(\bm1_{n}-e_{i_{k}})^{\top},$$
and
$$g_{k}(A_{k}x)=\delta_{[\bm0_{k-1},\bm1_{i_{k}-1}]\times [-\infty,s] \times [\bm0_{n-i_{k}},\bm1_{n-i_{k}}]}(A_{k}x).$$
The conjugate function is
\begin{equation}
	g_{k}^*(y)=\sigma_{[\bm0_{i_{k}-1},\bm1_{i_{k}-1}]\times [-\infty,s] \times [\bm0_{n-i_{k}},\bm1_{n-i_{k}}]}(y).
\end{equation}
Again, since $A_k$ is a rank-one update of the identity matrix, the Sherman--Morrison formula gives
\[
A_{k}^{-1}
= \left(\bm I_n + e_{i_{k}}(\bm1_{n}-e_{i_{k}})^{\top}\right)^{-1}
= \bm I_n - \frac{e_{i_{k}}(\bm1_{n}-e_{i_{k}})^{\top}}{1+(\bm1_{n}-e_{i_{k}})^{\top}e_{i_{k}}}=\bm I_n - e_{i_{k}}(\bm1_{n}-e_{i_{k}})^{\top}.
\]
Substituting $[c_{l},c_{u}]=[\bm0_{i_{k}-1},\bm1_{i_{k}-1}]\times [-\infty,s] \times [\bm0_{n-i_{k}},\bm1_{n-i_{k}}]$ and $c_{e}=0$ into (\ref{conju}) and
using the preconditioner $P_{k}=A_{k}^{\top}D_{k}A_{k}$, the optimality condition gives
\begin{align*}
	y^k
	=
	\Gamma_k^{-1}
	\Big(
	&
	\left\{a^k_{[1:i_k-1]}
	-
	\mathtt{clip}\left(a^k_{[1:i_k-1]},\bm 0_{i_k-1},\bm 1_{i_k-1}\right)\right\} \\
	&\times
	\left[a^k_{i_k}-s\right]_+\times
		\left\{a^k_{[i_k+1:n]}
	-
	\mathtt{clip}\left(a^k_{[i_k+1:n]},\bm 0_{n-i_k},\bm 1_{n-i_k}\right)\right\}
	\Big).
\end{align*}
\section{Subspace acceleration and conjugate momentum}\label{sec5}
Although the diagonal preconditioning strategy improves the scaling of the proximal gradient step and preserves the closed-form structure of the dual subproblem, its capability remains limited. In particular, diagonal preconditioning only performs coordinate-wise scaling and therefore cannot fully capture the coupling between variables or the richer curvature information of the objective function. As a consequence, the practical acceleration obtained from diagonal scaling alone may still be insufficient, especially for ill-conditioned problems.

To further enhance the performance of the algorithm, we incorporate a subspace acceleration mechanism. Within this framework, two key questions naturally arise. The first concerns how to construct a subspace that effectively captures useful curvature and descent information from past iterates. The second concerns how to design an efficient subspace model so that the resulting subproblem can be solved rapidly while maintaining good approximation quality. These issues will be addressed in the following subsections.
\subsection{Selection of subspace and approximate model}
To exploit historical information while keeping the computational cost low, we construct a low-dimensional subspace that captures useful search directions from recent iterations. In particular, provided that \(v_k\) and \(s_k\) are linearly independent, for $k\geq1$ we define the two-dimensional subspace

 $$\mathcal{L}_{k}=\mathtt{span}\{v_{k},s_k\},$$
where $v_k$ represents the current preconditioned proximal gradient direction and $s_k$ incorporates information from the previous step. Specifically, the direction $s_k$ is defined as

\begin{equation}\label{s_k}
	s_{k}:=\left\{
	\begin{aligned}
		&d_{k-1}, & x^{k}+d_{k-1}\in\mathrm{dom}(g \circ A), \\
		&\mathrm{\Pi}_{\mathrm{dom}(g \circ A)}^{P_{k}}(x^{k}+d_{k-1})-x^{k}, &  \textrm{otherwise}.
	\end{aligned}
	\right.
\end{equation}
\begin{remark}
	If $\mathrm{dom}(g \circ A)=\mathbb{R}^{n}$, then $s_{k}=d_{k-1}$. When $\mathrm{dom}(g \circ A)\neq\mathbb{R}^{n}$, as in constrained optimization problems, the direction $d_{k-1}$ may become infeasible at the point $x^{k}$. In this case, if $P_{k}= \bm I_{n}$, then $s_{k}$ reduces to $\hat{s}_{k}$ in \cite[Eq.~(17)]{LLL2026}. However, computing the Euclidean projection $\mathrm{\Pi}_{\mathrm{dom}(g \circ A)}(x^{k}+d_{k-1})$ can be expensive. Instead, when $d_{k-1}$ is infeasible we set
	 $$s_{k}=\mathrm{\Pi}_{\mathrm{dom}(g \circ A)}^{P_{k}}(x^{k}+d_{k-1})-x^{k},$$ under our preconditioning framework. 
\end{remark}
\par The choice of the direction $v_k$ is also crucial. Here we define $v_k$ as the preconditioned proximal gradient direction at $x^{k}$, which is obtained by solving the following subproblem:
\begin{equation}\label{vk}
	\min\limits_{v\in\mathbb{R}^{n}}\nabla f(x^{k})^{\top}v+g(Ax^{k}+Av)+\frac{1}{2}\|v\|^{2}_{P_{k}}.
\end{equation}

Having constructed the subspace $\mathcal{L}_k$, we next define the corresponding subspace model used to refine the search direction. 
Restricting the step to $\mathcal{L}_k$, we consider the following subspace proximal Newton subproblem:
\begin{equation}\label{sn}
	\begin{aligned}
		\min_{d\in \mathcal{L}_{k}}\nabla f(x^{k})^{\top}d+g(Ax^{k}+Ad)+\frac{1}{2}\|d\|^{2}_{H_{k}},     
	\end{aligned}
\end{equation}
where $H_{k}:=\nabla^{2}f(x^{k})$.
Since $\mathcal{L}_k=\mathrm{span}\{v_k,s_k\}$ is two-dimensional, any $d\in\mathcal{L}_k$ can be written as $d=G_k\alpha$, where $G_k=[v_k,s_k]$ and $\alpha\in\mathbb{R}^2$. Substituting this representation into \eqref{sn} yields the equivalent two-dimensional optimization  problem
\begin{equation}\label{subsp}
	\begin{aligned}
		\min_{\alpha\in\mathbb{R}^{2}}\nabla f(x^{k})^{\top}G_{k}\alpha+g(Ax^{k}+AG_{k}\alpha)+\frac{1}{2}\|\alpha\|^{2}_{Q_{k}}   ,
	\end{aligned}
\end{equation}
where $Q_{k}=\begin{bmatrix}
	v_{k}^{\top}H_{k}v_{k}   & &~ & v_{k}^{\top}H_{k}s_{k} \\
	
	v_{k}^{\top}H_{k}s_{k}& &~  &s_{k}^{\top}H_{k}s_{k}
\end{bmatrix}$.
\subsection{Conjugate basis of subspace}
Although problem \eqref{subsp} is only two-dimensional, obtaining its exact solution may still be nontrivial due to the presence of the nonsmooth term $g(Ax^k+AG_k\alpha)$. In many practical situations, computing the exact minimizer is unnecessary and may introduce additional computational overhead. Therefore, instead of solving \eqref{subsp} exactly, we aim to construct an efficient approximation of the minimizer.

To this end, we exploit the structure of the subspace model and perform optimization along carefully chosen directions. In particular, by transforming the basis of the subspace into a conjugate basis with respect to the $H_k$-inner product, the quadratic term becomes diagonal.

Specifically, we orthogonalize $s_k$ with respect to $v_k$ under the $H_k$-inner product and define
\begin{equation}
	\tilde{s}_k = s_{k} - \frac{s_{k}^{\top}H_kv_k}{v_{k}^{\top}H_kv_k}  v_k.
\end{equation}

With this construction we have $v_k^{\top}H_k\tilde{s}_k=0$. Consequently, problem \eqref{subsp} can be rewritten in the equivalent form
\begin{equation}\label{subss}
	\begin{aligned}
		\min_{\alpha\in\mathbb{R}^{2}}\nabla f(x^{k})^{\top}\tilde{G}_{k}\alpha+g(Ax^{k}+A\tilde{G}_{k}\alpha)+\frac{1}{2}\|\alpha\|^{2}_{\tilde{Q}_{k}}   ,
	\end{aligned}
\end{equation}
where $\tilde{G}_{k}=[v_{k},\tilde{s}_{k}]$, and $\tilde{Q}_{k}=\begin{bmatrix}
	v_{k}^{\top}H_{k}v_{k}   & &~ & 0 \\
	
0& &~  &\tilde{s}_{k}^{\top}H_{k}\tilde{s}_{k}
\end{bmatrix}$.
Instead of solving \eqref{subss} directly, we adopt an inexact strategy by solving two one-dimensional subproblems. The first subproblem optimizes along the direction $v_k$:
\begin{equation}\label{subss1}
	\begin{aligned}
		\min_{\alpha_{1}\in\mathbb{R}}\nabla f(x^{k})^{\top}v_{k}\alpha_{1} +g(Ax^{k}+\alpha_{1}Av_{k})+\frac{1}{2}v_{k}^{\top}H_{k}v_{k}(\alpha_{1})^{2}.
	\end{aligned}
\end{equation}
The second subproblem optimizes along the orthogonalized direction $\tilde{s}_k$:
\begin{equation}\label{subss2}
	\begin{aligned}
		\min_{\alpha_{2}\in\mathbb{R}}\nabla f(x^{k})^{\top}\tilde{s}_{k}\alpha_{2} +g(Ax^{k}+\alpha_{2}A\tilde{s}_{k})+\frac{1}{2}\tilde{s}_{k}^{\top}H_{k}\tilde{s}_{k}(\alpha_{2})^{2}.
	\end{aligned}
\end{equation}
Let $\alpha_1^k$ and $\alpha_2^k$ denote the minimizers of \eqref{subss1} and \eqref{subss2}, respectively. We then define the intermediate direction
$$\tilde{d}_{k}=\alpha_{1}^{k}v_{k}+\alpha_{2}^{k}\tilde{s}_{k}.$$

To further refine the search direction, we perform an additional one-dimensional refinement along $\tilde{d}_k$ by solving the one-dimensional subproblem:
\begin{equation}\label{subss3}
	\begin{aligned}
		\min_{\alpha_{3}\in\mathbb{R}}\nabla f(x^{k})^{\top}\tilde{d}_{k}\alpha_{3} +g(Ax^{k}+\alpha_{3}A\tilde{d}_{k})+\frac{1}{2}\tilde{d}_{k}^{\top}H_{k}\tilde{d}_{k}(\alpha_{3})^{2}.
	\end{aligned}
\end{equation}
Let $\alpha_3^k$ be the minimizer of \eqref{subss3}. The final search direction is then given by
$$d_{k}=\alpha_{3}^{k}\tilde{d}_{k}.$$
 Note that $v_{k}^{\top}H_{k}\tilde{s}_{k}=0$ and $\tilde{d}_{k}=\alpha_{1}^{k}v_{k}+\alpha_{2}^{k}\tilde{s}_{k}$, then $$\tilde{d}_{k}^{\top}H_{k}\tilde{d}_{k}=\nm{\alpha_{1}^{k}v_{k}}_{H_{k}}^{2}+\nm{\alpha_{2}^{k}\tilde{s}_{k}}_{H_{k}}^{2}.$$
 \begin{remark}
 	We adopt $d_k$ as the final search direction rather than $\tilde{d}_k$ for two reasons. First, in constrained problems the additional scaling step ensures that the resulting direction remains feasible. Second, solving \eqref{subss3} provides an adaptive stepsize along $\tilde{d}_k$, which improves the stability and effectiveness of the search direction.
 \end{remark}
 \par It remains to compute the Hessian--vector products $H_{k}v_{k}$ and $H_{k}\tilde{s}_{k}$. 
 To avoid explicitly forming the Hessian matrix, we approximate the Hessian--vector products using finite differences of gradients. Specifically, we use
\begin{equation}\label{fd}
	H_{k}v\approx H_k(v):=\frac{1}{\epsilon}(\nabla f(x^{k}+\epsilon v)-\nabla f(x^{k})),~v\in\{v_{k},\tilde{s}_{k}\}.
\end{equation}
Based on this approximation, the one-dimensional subproblems \eqref{subss1} and \eqref{subss2} can be reformulated as
\begin{equation}\label{subs1}
	\begin{aligned}
		\min_{\alpha_{1}\in\mathbb{R}}\nabla f(x^{k})^{\top}v_{k}\alpha_{1} +g(Ax^{k}+\alpha_{1}Av_{k})+\frac{q_k(v_{k})}{2}\nm{v_{k}}^{2}(\alpha_{1})^{2}   ,
	\end{aligned}
\end{equation}
and 
\begin{equation}\label{subs2}
	\begin{aligned}
		\min_{\alpha_{2}\in\mathbb{R}}\nabla f(x^{k})^{\top}\tilde{s}_{k}\alpha_{2} +g(Ax^{k}+\alpha_{2}A\tilde{s}_{k})+\frac{q_k(\tilde{s}_{k})}{2}\nm{\tilde{s}_{k}}^{2}(\alpha_{2})^{2},
	\end{aligned}
\end{equation}
where $q_{k}(v)\approx v^{\top}H_k(v)/\nm{v}^{2},~v\in\{v_{k},\tilde{s}_{k}\}$.
Denote $$\tilde{d}_{k}=\alpha_{1}^{k}v_{k}+\alpha_{2}^{k}\tilde{s}_{k},$$
where $\alpha_{1}^{k}$ and $\alpha_{2}^{k}$ are the minimizers of (\ref{subs1}) and (\ref{subs2}), respectively.
To further refine the search direction, we solve an additional one-dimensional subproblem along $\tilde d_k$:
\begin{equation}\label{subs3}
	\begin{aligned}
		\min_{\alpha_{3}\in\mathbb{R}}\nabla f(x^{k})^{\top}\tilde{d}_{k}\alpha_{3} +g(Ax^{k}+\alpha_{3}A\tilde{d}_{k})+\frac{q_{k,d}}{2}(\alpha_{3})^{2},
	\end{aligned}
\end{equation}
where
\begin{equation}\label{qkd}
	q_{k,d}:=q_k(v_{k})\nm{\alpha_{1}^{k}v_{k}}^{2}+q_k(\tilde{s}_{k})\nm{\alpha_{2}^{k}\tilde{s}_{k}}^{2}
\end{equation} 
The final search direction is then defined as
\begin{equation}\label{dk}
	d_{k}:=\alpha_{3}^{k}\tilde{d}_{k},
\end{equation} 
where $\alpha_{3}^{k}$ is the minimizer of \eqref{subs3}.
\vspace{0.2cm}
\par The remaining question is whether the obtained search direction $d_k$ satisfies the sufficient descent condition. Before analyzing this property, we first establish the following auxiliary lemma.
\vspace{-0.2cm}
\begin{lemma}\label{ll2}
Let $h:\mathbb{R}^{n}\rightarrow\mathbb{R}\cup\{+\infty\}$ be a proper convex and lower semicontinuous function, which is not necessarily differentiable. Assume that $x^{*}$ is the minimizer of
	\begin{equation}\label{op}
		\min\limits_{x\in\mathbb{R}^{n}}h(x)+\frac{1}{2}\nm{x}^{2}_{P},
	\end{equation}
where $P\succ0$. Then 
\begin{equation}
	h(x^*)-h(0)\leq-\nm{x^{*}}^{2}_{P}
\end{equation}
\end{lemma}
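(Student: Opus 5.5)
The plan is to use the first-order optimality condition of \eqref{op} together with the subgradient inequality for $h$. The one subtlety is that $0$ may not lie in $\mathrm{dom}\,h$. If $h(0)=+\infty$, the right-hand side $-\nm{x^*}^2_P$ is finite while $h(0)=+\infty$, so under the convention $h(x^*)-h(0)=-\infty$ the inequality holds trivially. I will therefore assume $0\in\mathrm{dom}\,h$. This is the case in all later uses: there $h(v)=\nabla f(x^k)^\top v+g(Ax^k+Av)$ with $x^k$ feasible, so $h(0)=g(Ax^k)<\infty$.

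First, the objective $h(x)+\frac12\nm{x}_P^2$ is proper, lower semicontinuous and strongly convex, since $P\succ 0$. Hence the minimizer $x^*$ exists, is unique, and lies in $\mathrm{dom}\,h$. Fermat's rule gives
\[
0\in\partial\Big(h+\tfrac12\nm{\cdot}_P^2\Big)(x^*).
\]
The quadratic term is finite and differentiable everywhere, so the subdifferential sum rule applies with no qualification condition. This yields $-Px^*\in\partial h(x^*)$.

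Next, apply the definition of the subdifferential at $x^*$ with test point $y=0$:
\[
h(0)\ \ge\ h(x^*)+\langle -Px^*,\,0-x^*\rangle \;=\; h(x^*)+\nm{x^*}_P^2 .
\]
Rearranging gives $h(x^*)-h(0)\le -\nm{x^*}_P^2$, which is the claim.

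A stronger bound, $-\frac32\nm{x^*}^2_P$, would follow from comparing objective values using strong convexity, but the stated inequality only needs the subgradient step above. The only point requiring care is the justification of the sum rule and the domain issue, and neither is a real obstacle here. There is no genuinely hard step: the argument is two lines once the optimality condition $-Px^*\in\partial h(x^*)$ is in hand.
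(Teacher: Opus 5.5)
Your argument is correct and matches the paper's proof: Fermat's rule gives $-Px^*\in\partial h(x^*)$, and the subgradient inequality at the test point $0$ then yields $h(0)\ge h(x^*)+\|x^*\|_P^2$. One correction to your closing aside: the claimed stronger bound $-\tfrac32\|x^*\|_P^2$ is false. Strong convexity of $h+\tfrac12\|\cdot\|_P^2$ only gives $h(0)\ge h(x^*)+\tfrac12\|x^*\|_P^2+\tfrac12\|x^*\|_P^2$, which is the same bound, and the linear choice $h(x)=c^\top x$ (with $x^*=-P^{-1}c$) attains equality $h(x^*)-h(0)=-\|x^*\|_P^2$.
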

\begin{proof}
	Since $P\succ0$ and $h$ is convex, the optimality condition of (\ref{op}) gives
	$$0\in Px^*+\partial h(x^*).$$
	Combining this with the convexity of $h$ yields
	$$h(0)-h(x^*)\geq (-Px^*)^{\top}(0-x^*).$$
	Rearranging the terms gives the desired result.
 \end{proof}

We are now ready to provide a sufficient condition under which the direction $d_k$ is a descent direction.

\begin{proposition}\label{suco}
	Assume that there exist constants $0<c_{1}\leq c_{2}$ and $c_{3}>0$ such that
	 $c_{1}\leq q_{k}(v_{k})\leq c_{2}$, $q_{k}(\tilde{s}_{k})\geq c_{1}$ and $P_{k}\succeq c_{3}\bm{I}_{n}$ in (\ref{subs1}), (\ref{subs2}) and (\ref{vk}) for all $k$. Then, the search direction $d_{k}$ defined in (\ref{dk}) satisfies the following conditions:  
	\begin{equation}\label{con1}
		\nabla f(x^{k})^{\top}d_{k}+g(Ax^{k}+Ad_{k})-g(Ax^{k})\leq-\frac{c_1}{2}\nm{d_{k}}^{2}.
		\end{equation}
	\begin{equation}\label{con2}
		\nabla f(x^{k})^{\top}d_{k}+g(Ax^{k}+Ad_{k})-g(Ax^{k})\leq\frac{\min\{1,c_{3}/c_{2}\}}{4}(\nabla f(x^{k})^{\top}v_{k}+g(Ax^{k}+Av_{k})-g(Ax^{k})).
	\end{equation}
\end{proposition}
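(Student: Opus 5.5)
The plan is to work throughout with the convex function
\[
\phi_k(d):=\nabla f(x^{k})^{\top}d+g(Ax^{k}+Ad)-g(Ax^{k}),
\]
which is proper, convex and lower semicontinuous with $\phi_k(0)=0$. Each of the subproblems \eqref{vk}, \eqref{subs1}, \eqref{subs2}, \eqref{subs3} is of the form ``minimize $\phi_k(\text{linear in the variable})$ plus a positive definite quadratic'', so Lemma~\ref{ll2} applies to each. Applied to \eqref{vk} with $h=\phi_k$ and $P=P_k$, it gives $\phi_k(v_k)\le-\|v_k\|_{P_k}^2\le -c_3\|v_k\|^2$. Applied to \eqref{subs3} with $h(\alpha)=\phi_k(\alpha\tilde d_k)$ and the scalar $q_{k,d}$, it gives $\phi_k(d_k)\le -q_{k,d}(\alpha_3^k)^2$.

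For \eqref{con1}, I combine the last bound with $\|d_k\|^2=(\alpha_3^k)^2\|\alpha_1^kv_k+\alpha_2^k\tilde s_k\|^2\le 2(\alpha_3^k)^2(\|\alpha_1^kv_k\|^2+\|\alpha_2^k\tilde s_k\|^2)$. The hypotheses $q_k(v_k)\ge c_1$ and $q_k(\tilde s_k)\ge c_1$ give $q_{k,d}\ge c_1(\|\alpha_1^kv_k\|^2+\|\alpha_2^k\tilde s_k\|^2)$. Hence $\frac{c_1}{2}\|d_k\|^2\le q_{k,d}(\alpha_3^k)^2\le-\phi_k(d_k)$, which is \eqref{con1}.

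For \eqref{con2}, set $\tau:=\min\{1,c_3/c_2\}$. The proof has three steps.

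\textbf{Step 1 (compare $\alpha_1^k$ with $\tau v_k$).} Optimality of $\alpha_1^k$ in \eqref{subs1}, tested at $\alpha_1=\tau$, together with convexity $\phi_k(\tau v_k)\le\tau\phi_k(v_k)$ and $q_k(v_k)\le c_2$, gives
\[
\phi_k(\alpha_1^kv_k)+\tfrac{q_k(v_k)}{2}\|\alpha_1^kv_k\|^2\le \tau\phi_k(v_k)+\tfrac{c_2\tau^2}{2}\|v_k\|^2 .
\]
Using $\|v_k\|^2\le-\phi_k(v_k)/c_3$, the right-hand side is at most $\tau\phi_k(v_k)\bigl(1-\tfrac{\tau c_2}{2c_3}\bigr)$. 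Since $\tau c_2\le c_3$ and $\phi_k(v_k)\le 0$, this is at most $\tfrac{\tau}{2}\phi_k(v_k)$.

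\textbf{Step 2 (the $\tilde s_k$ term).} Optimality of $\alpha_2^k$ in \eqref{subs2}, tested at $\alpha_2=0$, gives $\phi_k(\alpha_2^k\tilde s_k)+\frac{q_k(\tilde s_k)}{2}\|\alpha_2^k\tilde s_k\|^2\le 0$.

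\textbf{Step 3 (combine through \eqref{subs3}).} Optimality of $\alpha_3^k$ in \eqref{subs3}, tested at $\alpha_3=\tfrac12$, gives
\[
\phi_k(d_k)\le\phi_k(d_k)+\tfrac{q_{k,d}}{2}(\alpha_3^k)^2\le\phi_k\bigl(\tfrac12\alpha_1^kv_k+\tfrac12\alpha_2^k\tilde s_k\bigr)+\tfrac{q_{k,d}}{8}.
\]
By convexity and the definition \eqref{qkd} of $q_{k,d}$, the right-hand side is bounded by
\[
\tfrac12\Bigl[\phi_k(\alpha_1^kv_k)+\tfrac{q_k(v_k)}{4}\|\alpha_1^kv_k\|^2\Bigr]+\tfrac12\Bigl[\phi_k(\alpha_2^k\tilde s_k)+\tfrac{q_k(\tilde s_k)}{4}\|\alpha_2^k\tilde s_k\|^2\Bigr].
\]
Enlarging each $\tfrac14$ to $\tfrac12$ and then applying Steps 2 and 1 yields $\phi_k(d_k)\le\tfrac12\cdot\tfrac{\tau}{2}\phi_k(v_k)=\tfrac{\tau}{4}\phi_k(v_k)$, which is \eqref{con2}.

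The main obstacle is getting the constant exactly $\tfrac14$. A cruder route is to bound $\phi_k(\alpha_1^kv_k)$ alone and then use $\phi_k(d_k)\le\tfrac38\phi_k(\alpha_1^kv_k)$; this only yields $\tfrac{3\tau}{16}$. The fix is to keep the quadratic penalty $\tfrac{q}{2}\|\cdot\|^2$ attached to each one-dimensional model value throughout, rather than discarding it. The choice of the test point $\tfrac12$ in \eqref{subs3} is what makes the two halves combine cleanly.
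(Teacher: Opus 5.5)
Your proof is correct and follows essentially the same route as the paper. For \eqref{con1} you apply Lemma~\ref{ll2} to \eqref{subs3} and use $\|a+b\|^2\le 2(\|a\|^2+\|b\|^2)$; for \eqref{con2} you test $\alpha_3=\tfrac12$ in \eqref{subs3}, split by convexity, show the $\tilde s_k$ half is nonpositive, and test $\alpha_1=\min\{1,c_3/c_2\}$ in \eqref{subs1} together with Lemma~\ref{ll2} for $v_k$. The only differences are cosmetic: you bound the $\tilde s_k$ term by comparing with $\alpha_2=0$ rather than invoking Lemma~\ref{ll2}, and you use $\|v_k\|^2\le-\phi_k(v_k)/c_3$ directly instead of passing through $\|v_k\|_{P_k}^2$.
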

\begin{proof}
	By Lemma \ref{ll2} and the definition of $d_k$, we obtain $$\nabla f(x^{k})^{\top}d_{k}+g(Ax^{k}+Ad_{k})-g(Ax^{k})\leq-q_{k,d}\cdot(\alpha^{k}_{3})^{2}.$$ On the other hand, by the fact $\nm{a}^{2}+\nm{b}^{2}\geq1/2\nm{a+b}^{2}$ and the definition of $q_{k,d}$, we have $$q_{k,d}\cdot(\alpha_{3})^{2}=q_k(v_{k})\nm{\alpha^{k}_{3}\alpha_{1}^{k}v_{k}}^{2}+q_k(\tilde{s}_{k})\nm{\alpha^{k}_{3}\alpha_{2}^{k}\tilde{s}_{k}}^{2}\geq \frac{c_1}{2}\nm{\alpha^{k}_{3}(\alpha_{1}^{k}v_{k}+\alpha_{2}^{k}\tilde{s}_{k})}^{2}=\frac{c_1}{2}\nm{{d}_{k}}^{2}.$$
	Therefore, 
	$$\nabla f(x^{k})^{\top}d_{k}+g(Ax^{k}+Ad_{k})-g(Ax^{k})\leq-\frac{c_1}{2}\nm{d_{k}}^{2},$$
	which proves \eqref{con1}.
	\par Next we prove \eqref{con2}. Since $\alpha_{3}^{k}$ and $\alpha_{1}^{k}$ are the minimizers of \eqref{subs3} and \eqref{subs1}, respectively, we have
	\begin{alignat*}{2}
		&~\quad &&\nabla f(x^{k})^{\top}d_{k}+g(Ax^{k}+Ad_{k})-g(Ax^{k})\\
		&\overset{\mathclap{(\mathrm{set} ~\alpha_{3}=\frac{1}{2}~\mathrm{in}~ (\ref{subs3}))}}{\leq}\quad &&-\frac{q_{k,d}}{2}(\alpha^{k}_{3})^{2}+\frac{1}{2}\nabla f(x^{k})^{\top}\tilde{d}_{k}+g(Ax^{k}+\frac{1}{2}A\tilde{d}_{k})-g(Ax^{k})+\frac{q_{k,d}}{8}\\
		&\overset{\mathclap{(\mathrm{convexity~of~}g)}}{\leq} \quad &&\frac{1}{2}\nabla f(x^{k})^{\top}(\alpha_{1}^{k}v_{k}+\alpha_{2}^{k}\tilde{s}_{k})+ \frac{1}{2}g(Ax^{k}+\alpha_{1}^{k}Av_{k}) +  \frac{1}{2}g(Ax^{k}+\alpha_{2}^{k}A\tilde{s}_{k})\\
		&~~~\quad &&-g(Ax^{k})+\frac{1}{8}(q_k(v_{k})\nm{\alpha_{1}^{k}v_{k}}^{2}+q_k(\tilde{s}_{k})\nm{\alpha_{2}^{k}\tilde{s}_{k}}^{2})\\
		&=\quad \quad \quad\quad\quad&&\frac{1}{2}(\nabla f(x^{k})^{\top}v_{k}\alpha_{1}^{k}+g(Ax^{k}+\alpha_{1}^{k}Av_{k})-g(Ax^{k})+\frac{1}{4}q_k(v_{k})\nm{\alpha_{1}^{k}v_{k}}^{2})\\
		&~~~\quad &&+\underbrace{\frac{1}{2}(\nabla f(x^{k})^{\top}\tilde{s}_{k}\alpha_{2}^{k}+g(Ax^{k}+\alpha_{2}^{k}A\tilde{s}_{k})-g(Ax^{k})+\frac{1}{4}q_k(\tilde{s}_{k})\nm{\alpha_{2}^{k}\tilde{s}_{k}}^{2})}_{\leq0~(\mathrm{by~Lemma}~\ref{ll2})}\\
		&\leq\quad &&\frac{1}{2}(\nabla f(x^{k})^{\top}v_{k}\alpha_{1}^{k}+g(Ax^{k}+\alpha_{1}^{k}Av_{k})-g(Ax^{k})+\frac{1}{2}q_k(v_{k})\nm{\alpha_{1}^{k}v_{k}}^{2})\\
		&\overset{\mathclap{(\mathrm{any} ~0\leq\alpha_{1}\leq1)}}{\leq}\quad &&\frac{1}{2}(\nabla f(x^{k})^{\top}v_{k}\alpha_1+g(Ax^{k}+\alpha_{1}Av_{k})-g(Ax^{k})+\frac{1}{2}q_k(v_{k})\nm{v_{k}}^{2}(\alpha_{1})^{2} )\\
		&\overset{\mathclap{(\mathrm{convexity~of~}g)}}{\leq}\quad &&\frac{1}{2}(\nabla f(x^{k})^{\top}v_{k}\alpha_1+\alpha_{1}(g(Ax^{k}+Av_{k})-g(Ax^{k}))+\frac{c_{2}}{2c_{3}}\nm{v_{k}}_{P_{k}}^{2}(\alpha_{1})^{2} )\\
		&=\quad &&\frac{\alpha_{1}}{2}(\nabla f(x^{k})^{\top}v_{k}+g(Ax^{k}+Av_{k})-g(Ax^{k})+\frac{c_{2}\alpha_{1}}{2c_{3}}\nm{v_{k}}_{P_{k}}^{2})\\
		&\overset{\mathclap{(\mathrm{any} ~0\leq\alpha_{1}\leq\min\{1,c_3/c_{2}\})}}{\leq}\quad &&\frac{\alpha_{1}}{2}(\nabla f(x^{k})^{\top}v_{k}+g(Ax^{k}+Av_{k})-g(Ax^{k})+\frac{1}{2}\nm{v_{k}}_{P_{k}}^{2})\\
		&\overset{\mathclap{(\mathrm{by~Lemma}~\ref{ll2})}}{\leq}\quad &&\frac{\alpha_{1}}{4}(\nabla f(x^{k})^{\top}v_{k}+g(Ax^{k}+Av_{k})-g(Ax^{k}))\\
		&\leq\quad &&\frac{\min\{1,c_{3}/c_{2}\}}{4}(\nabla f(x^{k})^{\top}v_{k}+g(Ax^{k}+Av_{k})-g(Ax^{k})),
	\end{alignat*}
\end{proof}
where the last inequality follows by $P_k\succeq c_3I$ and choosing $\alpha_{1} =\min\{1,c_3/c_{2}\}$. The proof is completed.
\section{Preconditioned proximal gradient method with conjugate momentum}\label{sec6}

To guarantee the sufficient condition stated in Proposition \ref{suco}, 
for $v\in\{v_{k},\tilde{s}_{k}\}$ we define
\begin{equation}\label{bb}
	q_k(v):=\left\{
	\begin{aligned}
		&\max\left\{c_{1},\min\left\{\frac{v^{\top}H_k(v)}{\nm{v}^{2}}, c_{2}\right\}\right\}, & v^{\top}H_k(v)&>0, \\
		&\max\left\{c_{1},\min\left\{\frac{\nm{H_{k}(v)}}{\nm{v}}, c_{2}\right\}\right\}, &v^{\top}H_k(v)&<0, \\
		& c_{1}, &  v^{\top}H_k(v)&=0,
	\end{aligned}
	\right.
\end{equation}
where $H_{k}(v)$ is defined in (\ref{fd}), $c_{1}$ and $c_{2}$ are the positive constants introduced in Proposition \ref{suco}.

The complete preconditioned proximal gradient method with conjugate momentum is described as follows.
\vspace{0.2cm}

\begin{algorithm}[H]  
	\caption{\small Preconditioned proximal gradient method with conjugate momentum ($\mathrm{P^{2}GM}$\_CM)}\label{ppg} 
	\begin{algorithmic}[1]
		\REQUIRE{$x^{0}\in\mathrm{dom}(g\circ A)$, $0<c_{1}\leq c_{2}$, $0<c_{3}\leq c_{4}$, $\sigma,\gamma\in(0,1)$}
		\FOR{$k=0,\cdots$}
		\STATE{Update $c_{3}\bm{I}_{n}\preceq P_{k}\preceq c_{4}\bm{I}_{n}$} 
		\STATE{Compute $v_{k}$ as the solution of (\ref{vk})} 
		\IF{$v_{k}=0$}
		\RETURN{$x^{k}$  }
		\ELSE{
			\IF{$k=0$}
			\STATE{Set $d_{k}=v_{k}$}
			\ELSE{
			\STATE{Compute $s_{k}$ as in (\ref{s_k})}
			\STATE{Update $H_{k}(v_{k})$ and $q_{k}(v_{k})$ as in (\ref{fd}) and (\ref{bb}), respectively}
				\STATE{Update $$\tilde{s}_{k}:=s_{k}-\frac{s_{k}^{\top}H_{k}(v_{k})}{q_{k}(v_{k})\nm{v_{k}}^{2}}v_{k}$$}
				\STATE{Update $H_{k}(\tilde{s}_{k})$ and $q_{k}(\tilde{s}_{k})$ as in (\ref{fd}) and (\ref{bb}), respectively}
				\STATE{Compute $\alpha^{k}_{1}$ and $\alpha^{k}_{2}$ as the minimizers of (\ref{subs1}) and (\ref{subs2}), respectively}
				\STATE{Update $\tilde{d}_{k}:=\alpha^{k}_{1}v_{k}+\alpha^{k}_{2}\tilde{s}_{k}$}
				\STATE{Update $q_{k,d}$ as in (\ref{qkd})}
				\STATE{Compute $\alpha^{k}_{3}$ as the minimizer of (\ref{subs3})}
				\STATE{Update $d_{k}:=\alpha^{k}_{3}\tilde{d}_{k}$}
			}
		\ENDIF
		\STATE{Compute the stepsize $t_{k}\in(0,1]$ in the following way:
			\begin{align*}
			t_{k}:=\max\big\{\gamma^{j}:j\in\mathbb{N},~&F\left(x^{k}+\gamma^{j}d_{k}\right)-F(x^{k})\\
			&\leq \sigma\gamma^{j}(\nabla f(x^{k})^{\top}d_{k} + g(Ax^{k}+ Ad_{k})-g(Ax^{k})).\big\}	
			\end{align*}
		}
		\STATE{Update $x^{k+1}:=x^{k}+t_{k}d_{k}$}
		}
		\ENDIF
		\ENDFOR
	\end{algorithmic}
\end{algorithm}
\begin{remark}
	Lines 3, 10, 14, and 17 contribute to the main computational cost of Algorithm \ref{ppg}, since each of these steps requires solving a subproblem. 
	Fortunately, due to the specific choice of $P_{k}$, the subproblems appearing in Lines 3 and 10 admit closed-form solutions. 
	The remaining challenge lies in solving the three one-dimensional subproblems (\ref{subs1}), (\ref{subs2}), and (\ref{subs3}). 
	In the next subsection, we present efficient methods for solving these one-dimensional subproblems.
\end{remark}

\subsection{Methods for one-dimensional subproblems}
The subproblems arising in Algorithm \ref{ppg} reduce to several one-dimensional optimization problems. 
Depending on the structure of the objective function and the constraints, these problems can be categorized into two types: constrained quadratic optimization problems and $\ell_{1}$-regularized optimization problems. 
In the following, we present specialized solution strategies for each case.
\subsubsection{Constrained optimization problems}
The one-dimensional subproblem takes the form:
$$\min\limits_{x^{k}+td_{k}\in\Omega}a{t}^{2}+bt,$$
where $a>0$ and $b\in\mathbb{R}$ are constants.
Note that there exist $t_{l}$ and $t_{u}$ such that
$$\{t:x^{k}+td_{k}\in\Omega\}=[t_{l},t_{u}].$$
The optimal solution is therefore given by
 $$t_{k}=\min\{\max\{t_{l},-\frac{b}{2a}\},t_{u}\}.$$

We summarize the procedure in the following algorithm.

\begin{algorithm}[H]  
	\caption{Method for one-dimensional constrained optimization problem}
	\begin{algorithmic}[1]
		\REQUIRE{$x^{k},d_{k}\in\mathbb{R}^{n}$, $a,b$, $\Omega$ }
		\STATE{Compute $t_k=-\frac{b}{2a}$}
		\IF{$x^{k}+t_{k}d_{k}\in\Omega$}
		\RETURN{$t_{k}$}
		\ELSE{
			\RETURN{$t_{k}=-\frac{b}{2a}\max\{t: x^{k}+t(-\frac{b}{2a}d_{k})\in\Omega\}$}
		}
		\ENDIF
	\end{algorithmic}
\end{algorithm}
\subsubsection{$\ell_{1}$ regularized problems}
Another type of subproblem arising in Algorithm \ref{ppg} involves $\ell_{1}$ regularization and takes the form:
$$\min h(t):= a{t}^{2}+bt+\|v+td\|_{1},$$
where $a>0,b\in\mathbb{R}$ and $v,d\in\mathbb{R}^{m}$. The optimality condition for this problem is
$$0\in\partial{h}(t):= 2at+b+\sum\limits_{i\in[m]}\partial|v_{i}+td_{i}|,$$
where \begin{equation}\label{partial}
	\partial|v_{i}+td_{i}|:=\left\{
	\begin{aligned}
		&d_{i}, & v_{i}+td_{i}&>0, \\
		&[-|d_{i}|,|d_{i}|], &v_{i}+td_{i}&=0, \\
		& -d_{i}, &  v_{i}+td_{i}&<0.
	\end{aligned}
	\right.
\end{equation}
Since $h$ is convex, the subdifferential satisfies $$\partial^{+}h(t_{1})\leq \partial^{-}h(t_{2}),\quad t_{1}<t_{2}.$$ 
Moreover, we have $$\partial h(t)\subset[2at+b-D,2at+b+D],$$ where $D:=\sum_{i\in[m]}|d_{i}|$. Based on this property, we develop a partitioning method with the initial interval
$$
\left[\frac{-D-b}{2a},\frac{D-b}{2a}\right].
$$

\begin{algorithm}[H]  
	\caption{Partitioning method for one-dimensional $\ell_{1}$ regularized problem}\label{part} 
	\begin{algorithmic}[1]
		\REQUIRE{$L=\frac{-D-b}{2a},U=\frac{D-b}{2a},S=[L,U]\cap\{-\frac{v_{i}}{d_{i}}:d_{i}\neq0,i\in[m]\}\cup\{L,U\}$, $Bool=\mathtt{True}$}
		\WHILE{$Bool$}
		\IF{$|S|\leq2$}
		\STATE{$Bool=\mathtt{False}$}
		\STATE{Compute  $s = -(b+\sum_{i\in[m]}\mathtt{sign}(v_{i}+\frac{L+U}{2}d_{i})d_{i})/(2a)$}
		\RETURN{$\max\left\{L,\min\left\{s, U\right\}\right\}$}
		\ELSE{
			\STATE{Select $s_{median}$ as the median of $S$}
			\STATE{Compute the subdifferential $\partial h(s_{median})$} 
			\IF{$0\in\partial h(s_{median})$}
			\STATE{$Bool=\mathtt{False}$}
			\RETURN{$s_{median}$}
			\ELSE{
				\IF{$\partial^{+} h(s_{median})<0$}
				\STATE{Update $L=s_{median}$}
				\STATE{Update $S=S_{[s\geq s_{median}]}$}
				\ELSE{
					\STATE{Update $U=s_{median}$}
					\STATE{Update $S=S_{[s\leq s_{median}]}$}
				}
				\ENDIF
			}
			\ENDIF
		}
		\ENDIF
		\ENDWHILE
	\end{algorithmic}
\end{algorithm}

\subsection{Convergence analysis}
In this section we analyze the convergence properties of the proposed algorithm.
We first show that the stepsize produced by the line-search procedure admits a uniform lower bound.
\vspace{1mm}
\begin{lemma}
	Suppose that $f$ is $L$-smooth. The stepsize generated by Algorithm \ref{ppg} has a lower bound for $k\geq1$.
	\begin{equation}\label{lbt}
		t_{\min}:=\min\left\{{\gamma(1-\sigma)c_1}/{L},1\right\}.
	\end{equation} 
\end{lemma}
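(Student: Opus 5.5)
The argument is the standard Armijo backtracking bound, with the curvature term supplied by \eqref{con1} of Proposition \ref{suco}. Throughout, write
\[
\Delta_k:=\nabla f(x^{k})^{\top}d_{k}+g(Ax^{k}+Ad_{k})-g(Ax^{k}).
\]
The goal is to show that every trial step $t\in(0,\min\{2(1-\sigma)c_1/L,1\}]$ already satisfies the Armijo test of Algorithm \ref{ppg}. Backtracking then stops at some $\gamma^j$ with either $\gamma^j=1$ or $\gamma^{j-1}>2(1-\sigma)c_1/L$. In both cases $t_k\ge \min\{\gamma(1-\sigma)c_1/L,1\}$; the factor $2$ is simply discarded, so the claimed bound \eqref{lbt} holds with room to spare.

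\textbf{Step 1 (smooth part).} For $t\in(0,1]$, the descent inequality from $L$-smoothness gives
\[
f(x^k+td_k)\le f(x^k)+t\nabla f(x^k)^{\top}d_k+\tfrac{L}{2}t^2\|d_k\|^2 .
\]

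\textbf{Step 2 (nonsmooth part).} Since $x^k+td_k=(1-t)x^k+t(x^k+d_k)$, convexity of $g$ gives
\[
g(Ax^k+tAd_k)\le (1-t)g(Ax^k)+t\,g(Ax^k+Ad_k).
\]
Adding this to Step 1 yields
\[
F(x^k+td_k)-F(x^k)\le t\Delta_k+\tfrac{L}{2}t^2\|d_k\|^2 .
\]
Note that $x^k+d_k\in\mathrm{dom}(g\circ A)$ because $\Delta_k<\infty$: $d_k$ minimizes \eqref{subs3}, whose objective is finite at $\alpha_3=0$. So the right-hand side is finite.

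\textbf{Step 3 (use \eqref{con1}).} For $k\ge1$, the direction $d_k$ is produced by the subspace procedure. The rule \eqref{bb} enforces $q_k(v_k)\in[c_1,c_2]$ and $q_k(\tilde s_k)\ge c_1$, and the algorithm keeps $P_k\succeq c_3 I$, so the hypotheses of Proposition \ref{suco} hold. Hence \eqref{con1} gives $\|d_k\|^2\le -\tfrac{2}{c_1}\Delta_k$. Substituting,
\[
F(x^k+td_k)-F(x^k)\le t\Delta_k\Bigl(1-\tfrac{Lt}{c_1}\Bigr).
\]
Because $\Delta_k\le0$, this is at most $\sigma t\Delta_k$ as soon as $1-Lt/c_1\ge\sigma$, that is, $t\le (1-\sigma)c_1/L$. (This cruder threshold drops the factor $2$ from the start, which is harmless.) Consequently, if $t_k<1$, the rejected previous trial $t_k/\gamma$ must exceed $(1-\sigma)c_1/L$, so $t_k>\gamma(1-\sigma)c_1/L$. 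Combined with the case $t_k=1$, this gives $t_k\ge t_{\min}$.

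The main point needing care is Step 3's reliance on Proposition \ref{suco}. I must confirm that the safeguards in \eqref{bb} match its hypotheses. I also need the degenerate cases: when $\Delta_k=0$, \eqref{con1} forces $d_k=0$ and the test holds trivially, so I would remark on this. The restriction $k\ge1$ reflects that at $k=0$ one has $d_0=v_0$, whose descent constant depends on $c_3$ rather than $c_1$. Otherwise the argument is routine.
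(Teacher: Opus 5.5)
Your argument is correct and is essentially the paper's proof in contrapositive form. The paper combines the failed Armijo test at $t_k/\gamma$ with the same descent-lemma and convexity bound and with \eqref{con1} to get $t_k\ge\gamma(1-\sigma)c_1/L$, whereas you show directly that every trial step $t\le(1-\sigma)c_1/L$ is accepted. One correction: \eqref{con1} only gives $-\Delta_k\ge\frac{c_1}{2}\|d_k\|^2$, so the threshold $(1-\sigma)c_1/L$ in your Step 3 is exactly what this route yields rather than a cruder one. Your opening claim with $2(1-\sigma)c_1/L$ is therefore unsupported, and there is no ``room to spare'', though this does not affect the conclusion.
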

\vspace{1mm}
\begin{proof} It suffices to consider the case $t_k<1$, in which the backtracking procedure is activated. 
	In this situation the Armijo condition is violated for the trial stepsize $t_k/\gamma$, yielding
	\begin{equation}\label{E4.4}
		F\left(x^{k}+\frac{t_{k}}{\gamma}d_{k}\right)-F(x^{k})>\sigma\frac{t_{k}}{\gamma}(\nabla f(x^{k})^{\top}d_{k} + g(Ax^{k}+ Ad_{k})-g(Ax^{k})).
	\end{equation}
	Since $f$ is $L$-smooth, we have
	\begin{equation}\label{up}
		\begin{aligned}
			&~~~F\left(x^{k}+\frac{t_{k}}{\gamma}d_{k}\right)-F(x^{k})\\
			&\leq\frac{t_{k}}{\gamma}\nabla f(x^{k})^{\top}d_{k} + g(Ax^{k}+\frac{t_{k}}{\gamma}Ad_{k}) - g(Ax^{k})+ \frac{L}{2}\left\|\frac{t_{k}}{\gamma}d_{k}\right\|^{2}\\
			&\leq\frac{t_{k}}{\gamma}(\nabla f(x^{k})^{\top}d_{k} + g(Ax^{k}+ Ad_{k})-g(Ax^{k}))+\frac{L}{2}\left\|\frac{t_{k}}{\gamma}d_{k}\right\|^{2},
		\end{aligned}
	\end{equation}
	where the second inequality follows from the convexity of $g$ and the fact that $t_{k}/{\gamma}\in(0,1]$. Combining this inequality with \eqref{E4.4} gives
	$$(\sigma - 1)(\nabla f(x^{k})^{\top}d_{k} + g(Ax^{k}+ Ad_{k})-g(Ax^{k}))\leq\frac{Lt_{k}}{2\gamma}\left\|d_{k}\right\|^{2}.$$
	Using condition \eqref{con1}, we obtain
	\begin{equation}\label{et}
		t_{k}\geq\frac{\gamma(1-\sigma)c_{1}}{L}.
	\end{equation}
	Therefore $t_k \ge t_{\min}$, which completes the proof. \qed
\end{proof}

To establish global convergence, we impose the following standard assumption on the objective function.

\begin{assumption}\label{a2}
	For any $x^{0}\mathbb\in\mathrm{dom}F$, the level set $\mathcal{L}_{F}(x^0):=\{x:F(x)\leq F(x^0)\}$ is compact.
\end{assumption}

Under this assumption we can prove the global convergence of the proposed algorithm.

\begin{theorem}
	Suppose that Assumption \ref{a2} holds and $f$ is $L$-smooth. Let $\{x^{k}\}$ be the sequence generated by Algorithm \ref{ppg}. Then $\{x^k\}$ has at least one accumulation point, and any accumulation point $x^*$ is a stationary point.
\end{theorem}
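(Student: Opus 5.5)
The plan is the standard monotone-descent argument for Armijo-type methods, built on Proposition \ref{suco} and the stepsize lower bound \eqref{lbt}.

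\emph{Step 1: descent and accumulation points.} Write
\[
\Delta_k:=\nabla f(x^{k})^{\top}d_{k}+g(Ax^{k}+Ad_{k})-g(Ax^{k}).
\]
By \eqref{con1} we have $\Delta_k\le -\frac{c_1}{2}\|d_k\|^2\le0$. The Armijo rule then gives
\[
F(x^{k+1})\le F(x^k)+\sigma t_k\Delta_k\le F(x^k).
\]
Hence the iterates stay in $\mathcal{L}_F(x^0)$, and in $\mathrm{dom}(g\circ A)$ since $F$ is finite along them. Compactness (Assumption \ref{a2}) gives at least one accumulation point. Because $F$ is lower semicontinuous on a compact set, it is bounded below there, so $F(x^k)$ decreases to a finite limit. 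Summing the Armijo inequalities and using $t_k\ge t_{\min}>0$ (for $k\ge1$) yields
\[
\sum_k (-\Delta_k)<\infty,\qquad\text{so } \Delta_k\to0.
\]

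\emph{Step 2: $v_k\to0$.} Write
\[
\Psi_k:=\nabla f(x^{k})^{\top}v_{k}+g(Ax^{k}+Av_{k})-g(Ax^{k}).
\]
Inequality \eqref{con2} gives $\Psi_k\ge \frac{4}{\min\{1,c_3/c_2\}}\Delta_k\to0$, and $\Psi_k\le0$, so $\Psi_k\to0$. The subproblem \eqref{vk} has the form of \eqref{op} with $h(v)=\nabla f(x^k)^\top v+g(Ax^k+Av)$, so Lemma \ref{ll2} gives
\[
\Psi_k\le-\|v_k\|_{P_k}^2\le -c_3\|v_k\|^2 .
\]
Therefore $v_k\to0$. (The case $k=0$ and finite termination with $v_k=0$ are trivial. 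A vanishing $v_k$ means $x^k$ is stationary: the optimality condition of \eqref{vk} at $v=0$ reads $0\in\nabla f(x^k)+A^\top\partial g(Ax^k)$.)

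\emph{Step 3: passing to the limit.} Take a subsequence $x^{k_j}\to x^*$. The optimality condition of \eqref{vk} gives
\[
-\nabla f(x^{k})-P_kv_k\in A^\top\partial g(Ax^k+Av_k),
\]
that is, $w_k:=-\nabla f(x^k)-P_kv_k=A^\top y_k$ with $y_k\in\partial g(Ax^k+Av_k)$. We have $\|P_k\|\le c_4$ and $v_k\to0$, so $w_{k_j}\to-\nabla f(x^*)$. I would avoid needing bounded $y_k$ by working with the variational inequality instead: for all $z$,
\[
g(Az)\ge g(Ax^k+Av_k)+\langle w_k,\,z-x^k-v_k\rangle .
\]
Taking $\liminf$ and using lower semicontinuity of $g$ gives
\[
g(Az)\ge g(Ax^*)+\langle-\nabla f(x^*),z-x^*\rangle\quad\text{for all } z,
\]
i.e. $-\nabla f(x^*)\in\partial(g\circ A)(x^*)$. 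This is the stationarity notion I would state, and it reduces to $0\in\nabla f(x^*)+A^\top\partial g(Ax^*)$ under a qualification condition such as full row rank of $A$.

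The main obstacle is the limit in Step 3. Lower semicontinuity only gives $\liminf g(Ax^{k_j}+Av_{k_j})\ge g(Ax^*)$, and this is the right direction for the inequality. So the argument closes without needing continuity of $g$, as long as stationarity is phrased through $\partial(g\circ A)$ rather than through bounded dual multipliers.
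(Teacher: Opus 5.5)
Your argument is correct and follows the paper's proof. Steps 1--2 are the same Armijo/compactness argument, combining \eqref{con1}, \eqref{con2}, Lemma \ref{ll2} and $t_k\ge t_{\min}$ to obtain $v_k\to0$. Step 3 then spells out, correctly, the limit passage the paper only asserts (citing $P_k\succeq c_3\bm{I}_n$): you use the upper bound $P_k\preceq c_4\bm{I}_n$ to get $P_kv_k\to0$, and a lower-semicontinuity (closed-graph) argument for $\partial(g\circ A)$ that needs neither a chain rule nor bounded multipliers. Your aside $w_k=A^\top y_k$ is therefore unnecessary, and it would in general need a qualification condition such as full row rank of $A$.
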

\begin{proof}
	By the Armijo line search, we deduce that $\{F(x^{k})\}$ is monotone decreasing and 
	\begin{equation}\label{des}
		F(x^{k+1}) - F(x^{k})\leq \sigma t_{k} (\nabla f(x^{k})^{\top}d_{k}+g(Ax^{k}+ Ad_{k})-g(Ax^{k}))\leq -\frac{\min\{1,c_{3}/c_{2}\}}{4} \sigma t_{k}\nm{v_{k}}_{P_{k}}^{2},
	\end{equation}
where the last inequality follows by relation (\ref{con2}) and Lemma \ref{ll2}. Therefore $x^{k}\in\mathcal{L}_{F}(x^0)$ for all $k$, and hence $\{x^{k}\}$ has at least one accumulation point $x^*$ due to the compactness of $\mathcal{L}_{F}(x^0)$. 
In particular, there exists an infinite index set $\mathcal{K}$ such that
\[
\lim_{k\in\mathcal{K}}x^{k}=x^* .
\]
Moreover, since $F$ is lower semicontinuous and $\mathcal{L}_{F}(x^0)$ is compact, the sequence $\{F(x^{k})\}$ is bounded below. 
Together with the monotonicity of $\{F(x^{k})\}$, this implies that $\{F(x^{k})\}$ is a Cauchy sequence. Hence
$$\lim_{k\rightarrow\infty}F(x^{k+1})-F(x^{k})=0.$$
Combining this limit with \eqref{des} yields
\begin{equation}\label{lim}
	\lim_{k\rightarrow\infty}t_{k}\nm{v_{k}}_{P_k}^{2}=0.
\end{equation}
Together with \eqref{et} and the fact $P_{k}\succeq c_{3}\bm{I}_{n}$, we obtain
$$\mathop{\lim}\limits_{k\rightarrow\infty}v_{k}=0.$$
Since $P_{k}\succeq c_{3}\bm{I}_{n}$, we conclude that $x^{*}$ is a stationary point. \qed
\end{proof}

The above theorem guarantees that every accumulation point is stationary. 
Next, we further strengthen the result by establishing a linear convergence rate under the stronger assumption that the objective function is strongly convex.

\begin{theorem}
	Suppose that $f$ is $L$-smooth and strongly convex with modulus $\mu>0$. Let $\{x^{k}\}$ be the sequence generated by Algorithm \ref{ppg}. Then, for all $k\geq1$
	$$F(x^{k+1})-F(x^*)\leq \left(1-\frac{\sigma}{4}\min\left\{\frac{\mu}{c_{4}},1\right\}\min\left\{\frac{c_{3}}{c_{2}},1\right\}\min\left\{\frac{\gamma(1-\sigma)c_1}{L},1\right\}\right)(F(x^{k})-F(x^*)).$$
	
\end{theorem}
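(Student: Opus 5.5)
We combine the per-iteration decrease \eqref{des} with a lower bound on the decrease of the preconditioned proximal gradient model in terms of the optimality gap. We write
\[
\Delta_k:=\nabla f(x^{k})^{\top}v_{k}+g(Ax^{k}+Av_{k})-g(Ax^{k})+\tfrac12\|v_k\|_{P_k}^2 .
\]
Since $v_k$ minimizes \eqref{vk}, $\Delta_k$ is the optimal value of that subproblem.

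The first step uses the Armijo rule, \eqref{con2} and the lower bound $t_k\ge t_{\min}$ from \eqref{lbt}. Together they give
\[
F(x^{k+1})-F(x^k)\le \sigma t_{\min}\frac{\min\{1,c_3/c_2\}}{4}\bigl(\nabla f(x^{k})^{\top}v_{k}+g(Ax^{k}+Av_{k})-g(Ax^{k})\bigr).
\]
The bracket is $\le 0$ by Lemma \ref{ll2}, and it is bounded above by $\Delta_k$ because $\tfrac12\|v_k\|_{P_k}^2\ge 0$. Hence it suffices to show
\[
\Delta_k\le -\min\{\mu/c_4,1\}\,(F(x^k)-F(x^*)).
\]
Subtracting $F(x^*)$ from both sides then yields the stated contraction factor.

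The second step, which is the key estimate, bounds $\Delta_k$ by restricting the subproblem \eqref{vk} to the segment $v=\theta(x^*-x^k)$ with $\theta\in[0,1]$. We have $P_k\preceq c_4\bm I_n$, $g$ is convex, and $f$ is strongly convex, so $\nabla f(x^k)^{\top}(x^*-x^k)\le f(x^*)-f(x^k)-\tfrac{\mu}{2}\|x^*-x^k\|^2$. These give
\[
\Delta_k\le \theta\bigl(F(x^*)-F(x^k)\bigr)-\tfrac{\theta\mu}{2}\|x^*-x^k\|^2+\tfrac{\theta^2c_4}{2}\|x^*-x^k\|^2 .
\]
We choose $\theta=\min\{\mu/c_4,1\}$, so that $\theta c_4\le\mu$ and the quadratic terms combine to a nonpositive quantity. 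This gives $\Delta_k\le-\theta(F(x^k)-F(x^*))$, as needed.

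The main obstacle is the bookkeeping of constants rather than any new idea. One has to use the full model value $\Delta_k$, including the $\tfrac12\|v_k\|^2_{P_k}$ term, when linking to \eqref{con2}: dropping that term costs only a nonpositive amount, but it must be dropped in the correct direction. One also has to check that $t_{\min}$ applies. It does for $k\ge1$, which is why the statement is restricted to $k\ge1$; for $k=0$ we have $d_0=v_0$, and a similar bound holds. Finally, existence and uniqueness of $x^*$ follow from strong convexity of $F$.
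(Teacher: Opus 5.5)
Your proposal is correct and follows essentially the same route as the paper's proof. Both arguments use strong convexity at $x^*$, the bound $P_k\preceq c_4\bm I_n$, and a convexity-of-$g$ rescaling to compare the optimality gap with the optimal value of \eqref{vk}; they then drop $\tfrac12\|v_k\|_{P_k}^2$ and combine \eqref{con2} with the Armijo rule and $t_k\ge t_{\min}$. The only difference is presentational: you evaluate the subproblem at the explicit point $\theta(x^*-x^k)$ with $\theta=\min\{\mu/c_4,1\}$. This spells out the paper's step from $\max_x\{\cdots-\frac{\mu}{2c_4}\|x-x^k\|_{P_k}^2\}$ to $\max\{c_4/\mu,1\}$ times the model value.
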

\begin{proof}
	By direct calculation, we have
	\begin{align*}
		F(x^{k}) - F(x^{*}) &\leq \max\limits_{x\in\mathbb{R}^{n}}\big\{\nabla f(x^{k})^{\top}(x^{k}-x)+g(Ax^{k})-g(Ax) - \frac{\mu}{2}\nm{x-x^k}^{2}\big\}\\
		&\leq \max\limits_{x\in\mathbb{R}^{n}}\big\{\nabla f(x^{k})^{\top}(x^{k}-x)+g(Ax^{k})-g(Ax) - \frac{\mu}{2c_{4}}\nm{x-x^k}_{P_k}^{2}\big\}\\
		&\leq\max\left\{\frac{c_{4}}{\mu},1\right\}\max\limits_{x\in\mathbb{R}^{n}}\big\{\nabla f(x^{k})^{\top}(x^{k}-x)+g(Ax^{k})-g(Ax) - \frac{1}{2}\nm{x-x^k}_{P_k}^{2}\big\}\\
		&=-\max\left\{\frac{c_{4}}{\mu},1\right\}\left(\nabla f(x^{k})^{\top}v_{k}+g(Ax^{k}+Av_{k})-g(Ax^{k})+\frac{1}{2}\nm{v_k}_{P_k}^{2}\right)\\
		&\leq -\max\left\{\frac{c_{4}}{\mu},1\right\}\left(\nabla f(x^{k})^{\top}v_{k}+g(Ax^{k}+Av_{k})-g(Ax^{k})\right)\\
		&\leq - 4\max\left\{\frac{c_{4}}{\mu},1\right\}\max\left\{\frac{c_{2}}{c_{3}},1\right\}(\nabla f(x^{k})^{\top}d_{k} + g(Ax^{k}+ Ad_{k})-g(Ax^{k})),
	\end{align*}
where the third inequality is due to the convexity of $g$ and the last inequality follows by (\ref{con2}).
Substituting this bound into line search condition gives
$$F(x^{k+1}) - F(x^{k})\leq -\frac{1}{4}\min\left\{\frac{\mu}{c_{4}},1\right\}\min\left\{\frac{c_{3}}{c_{2}},1\right\}\sigma t_{k}(F(x^{k}) - F(x^{*})).$$
The desired result follows by rearranging the inequality and adding $-F(x^*)$ to both sides.

\end{proof}

\section{Numerical experiments}\label{sec7}
In this section, we evaluate the empirical performance of the proposed method on several composite optimization problems. The goal of these experiments is to assess the efficiency and robustness of the proposed algorithm in comparison with several widely used first-order methods.

The tested algorithms are summarized as follows:

\begin{itemize}
	\item {FISTA\_bt}: FISTA with backtracking \cite{SGB2014};
	
	\item {FISTA\_bt\_rs}: FISTA with backtracking and gradient restart \cite{OC2015};
	
	\item {PDHG}: primal-dual hybrid gradient method \cite{HY2012}:
	\begin{align*}
		x^{k+1}&=x^{k} - \tau (\nabla f(x^{k})+A^{\top}y^{k}),\\
		\bar{x}^{k}&= x^{k+1}+\theta(x^{k+1}-x^{k}),\\
		y^{k+1}&=\mathrm{prox}_{\sigma g^{*}}(y^{k}+\sigma A\bar{x}^{k}),
	\end{align*}
where $\tau = 1 / L$, $\theta = 0.9$ and $\sigma = 4 / (\tau (1+\theta)^{2}\|A^{\top}A\|)$
	;
	\item $\mathrm{P^{2}GM}$\_M: the preconditioned proximal gradient method with momentum, obtained from Algorithm \ref{ppg} by removing Line 12, i.e., $\tilde{s}_{k}=s_{k}$;
	\item $\mathrm{P^{2}GM}$\_CM: preconditioned proximal gradient method with conjugate momentum, as described in Algorithm \ref{ppg}.
\end{itemize}
In $\mathrm{P^{2}GM}$\_M and $\mathrm{P^{2}GM}$\_CM, the preconditioning matrix is chosen as 
$P_k = \alpha_k P$, where $P$ is defined as in \eqref{p}. 
The scalar $\alpha_k$ is selected according to a Barzilai--Borwein
stepsize computed in the $P$-norm, which allows the scaling of the 
preconditioner to adapt to the local curvature of the objective function.

All numerical experiments were implemented in Python 3.7 and conducted on a personal computer equipped with an Intel Core i7-11390H processor (3.40 GHz) and 16 GB of RAM.
To evaluate the convergence behavior of the tested algorithms, we compute an approximation of the optimal objective value by running $\mathrm{P^{2}GM}$\_CM for a sufficiently large number of iterations. Denote this value by $\tilde F \approx F^*$. For each algorithm we then report the objective gap $F(x^k)-\tilde F$.

In the following subsections, we present numerical results on three representative applications: LASSO problems, simplex-constrained quadratic problems, and structured quadratic composite problems.

\subsection{LASSO problems}
We first consider the LASSO problem, which is a widely used benchmark in sparse signal recovery and machine learning. The problem is given by

\begin{equation}\label{lasso}
\min_{x \in \mathbb{R}^n}
\frac{1}{2}\|Ax-b\|^2 + \lambda \|x\|_1.	
\end{equation}
Here $A\in\mathbb{R}^{m\times n}$ is the data matrix, $b\in\mathbb{R}^m$ is the observation vector, and $\lambda>0$ is a regularization parameter controlling the sparsity of the solution.

The problem instance is synthetically generated to simulate a high-dimensional, noisy, and numerically ill-conditioned sparse regression environment.

\begin{itemize}
	
	\item \textbf{Dimensions:}
	We set the number of samples $m = 5000$ and the feature dimension $n = 500$.
	
	\item \textbf{Ill-Conditioned Design Matrix:}
The matrix $A$ is constructed using the singular value decomposition
	\[
	A = U \Sigma V^{\top}.
	\]
	To induce numerical stiffness, the singular values are logarithmically spaced in the interval $[10^{-3},1]$ and scaled by $\sqrt{m}$. This scaling ensures that the data term
	\[
	\frac{1}{m}A^{\top}A
	\]
	has a bounded spectral norm while maintaining a large condition number $\kappa \approx 10^{6}$. The orthogonal matrices $U$ and $V$ are generated from random Gaussian matrices via QR factorization.
	
	\item \textbf{Sparse Ground Truth:}
	The true solution $x_{\mathrm{true}}$ is generated with extreme sparsity ($p = 0.5\%$), resulting in approximately $2$--$3$ active nonzero elements drawn uniformly from $[1,2]$. This sparsity forces the optimization trajectory to frequently interact with the non-smooth boundaries induced by the $\ell_1$ regularization.
	
	\item \textbf{Noisy Observations:}
	The observation vector is generated as
	\[
	b = A x_{\mathrm{true}} + \varepsilon,
	\]
	where $\varepsilon$ is Gaussian noise with noise level $10^{-3}$. This introduces measurement noise and simulates realistic perturbations in the data.
	
\end{itemize}
\begin{figure}[H]
	\centering
	\subfigure[Iterations]
	{
		\begin{minipage}[H]{.45\linewidth}
			\centering
			\includegraphics[scale=0.4]{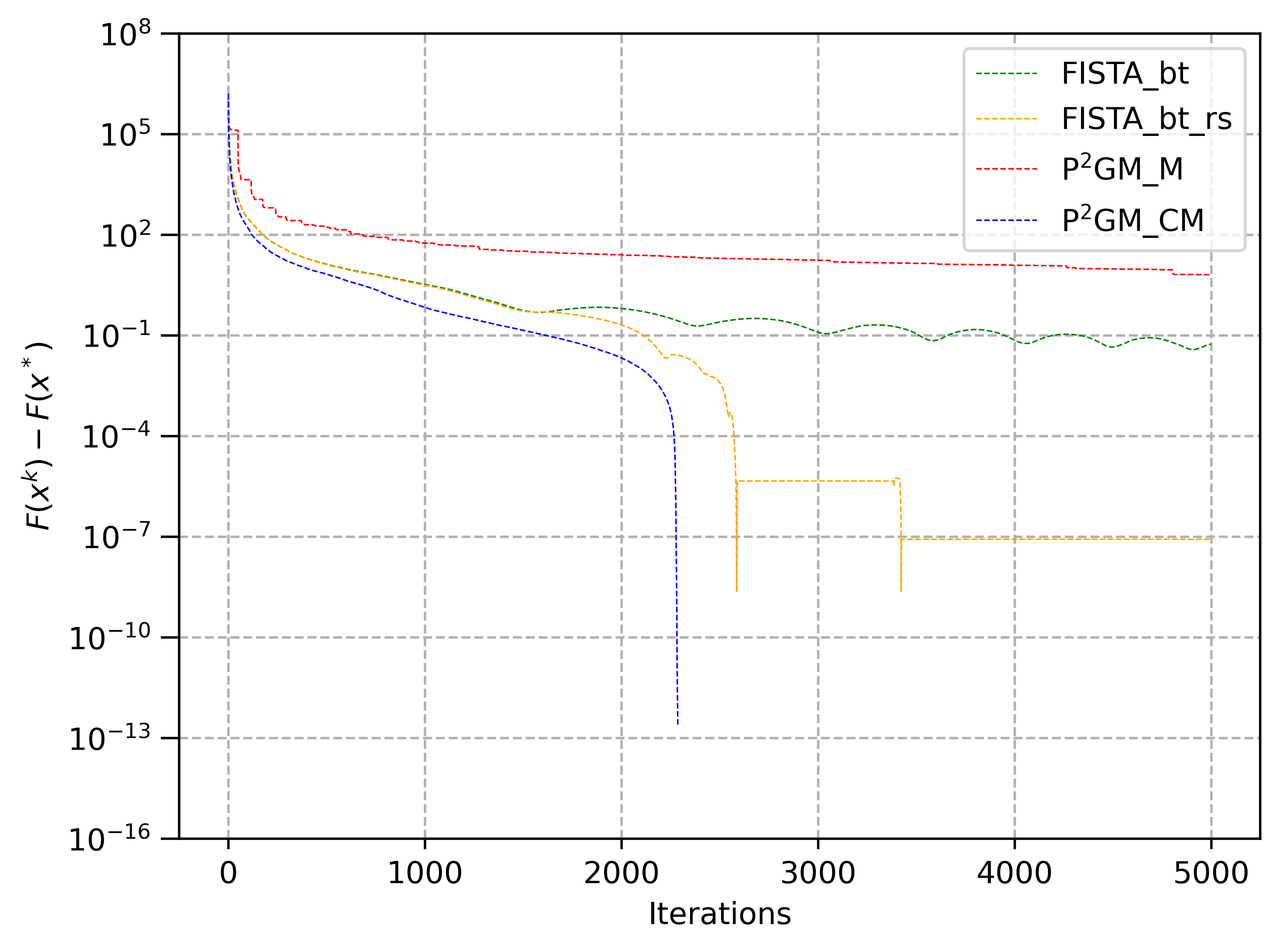} 
		\end{minipage}
	}
	\subfigure[CPU time]
	{
		\begin{minipage}[H]{.45\linewidth}
			\centering
			\includegraphics[scale=0.4]{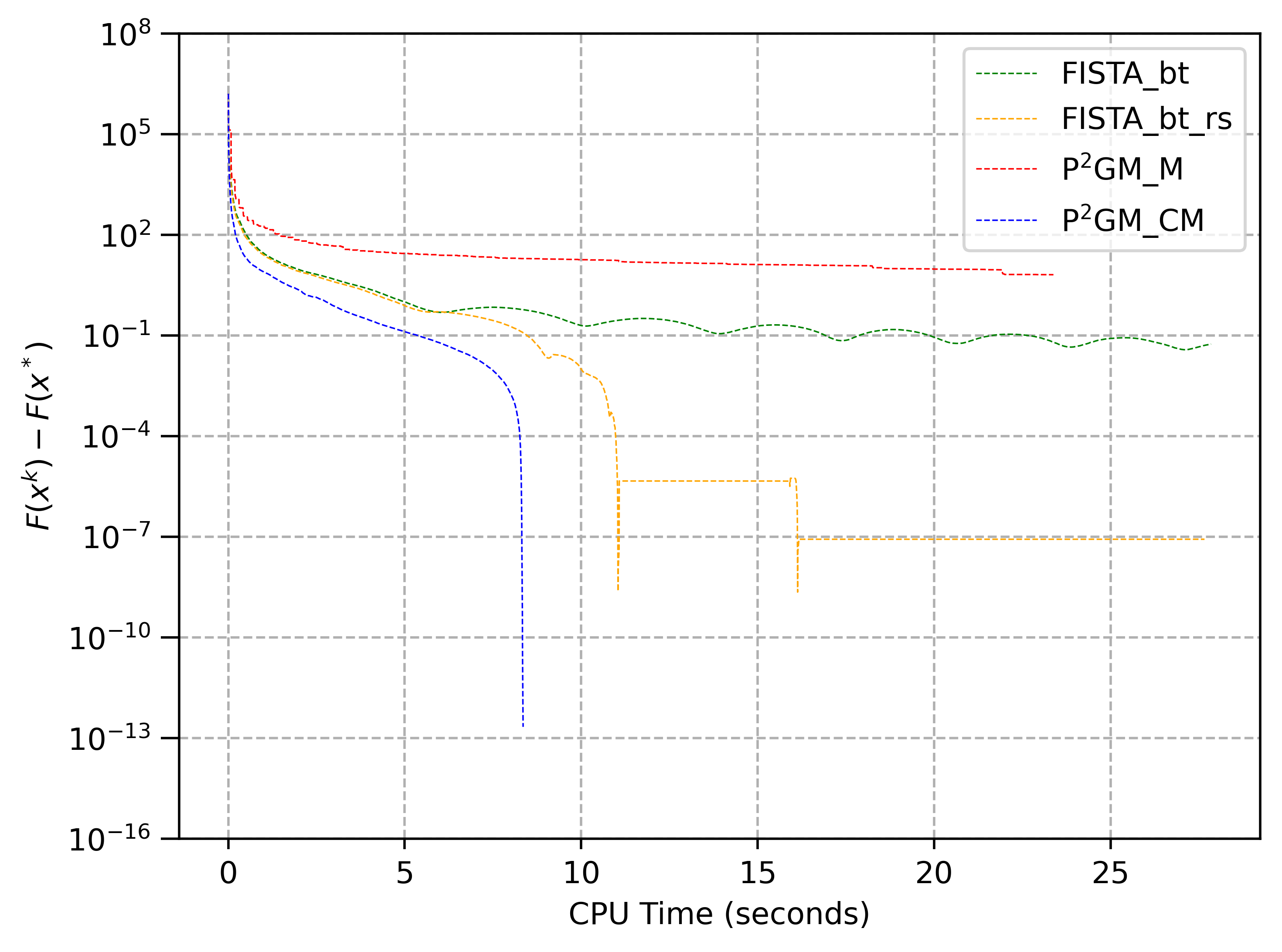} 
		\end{minipage}
	}
	\caption{Objective gaps w.r.t. iterations and CPU time for problem (\ref{lasso}) with $\lambda=10^{-4}$.}
	\label{f1}
\end{figure}

\subsection{Ill-conditioned quadratic problems with simplex constraint}

Next, we consider the minimization of a quadratic function over the unit simplex:
\begin{equation}\label{qp}
	\min_{x \in \Delta_n} f(x)=\frac12 x^\top Q x + c^\top x.
\end{equation}
This problem serves as a representative example of constrained optimization with a highly ill-conditioned objective function.

The problem instance is generated according to the following specifications:
\begin{itemize}
	
	\item \textbf{Dimension:}
	$n = 100$.
	
	\item \textbf{Condition Number:}
	\[
	\kappa =  5\times10^{5}.
	\]
	
	\item \textbf{Spectrum:}
	The eigenvalues of $Q$ are logarithmically spaced in the interval $[1,\kappa]$.
	
	\item \textbf{Matrix Construction:}
	The positive definite matrix $Q$ is constructed via spectral decomposition
	\[
	Q = U \Lambda U^\top ,
	\]
	where $U$ is a random orthogonal matrix generated from a Gaussian matrix via QR factorization and $\Lambda=\mathrm{diag}(\lambda_1,\ldots,\lambda_n)$ contains the prescribed eigenvalues.
	
	\item \textbf{Linear Term:}
	The vector $c$ is sampled from a standard Gaussian distribution.	
\end{itemize}
\begin{figure}[H]
	\centering
	\subfigure[Iterations]
	{
		\begin{minipage}[H]{.45\linewidth}
			\centering
			\includegraphics[scale=0.4]{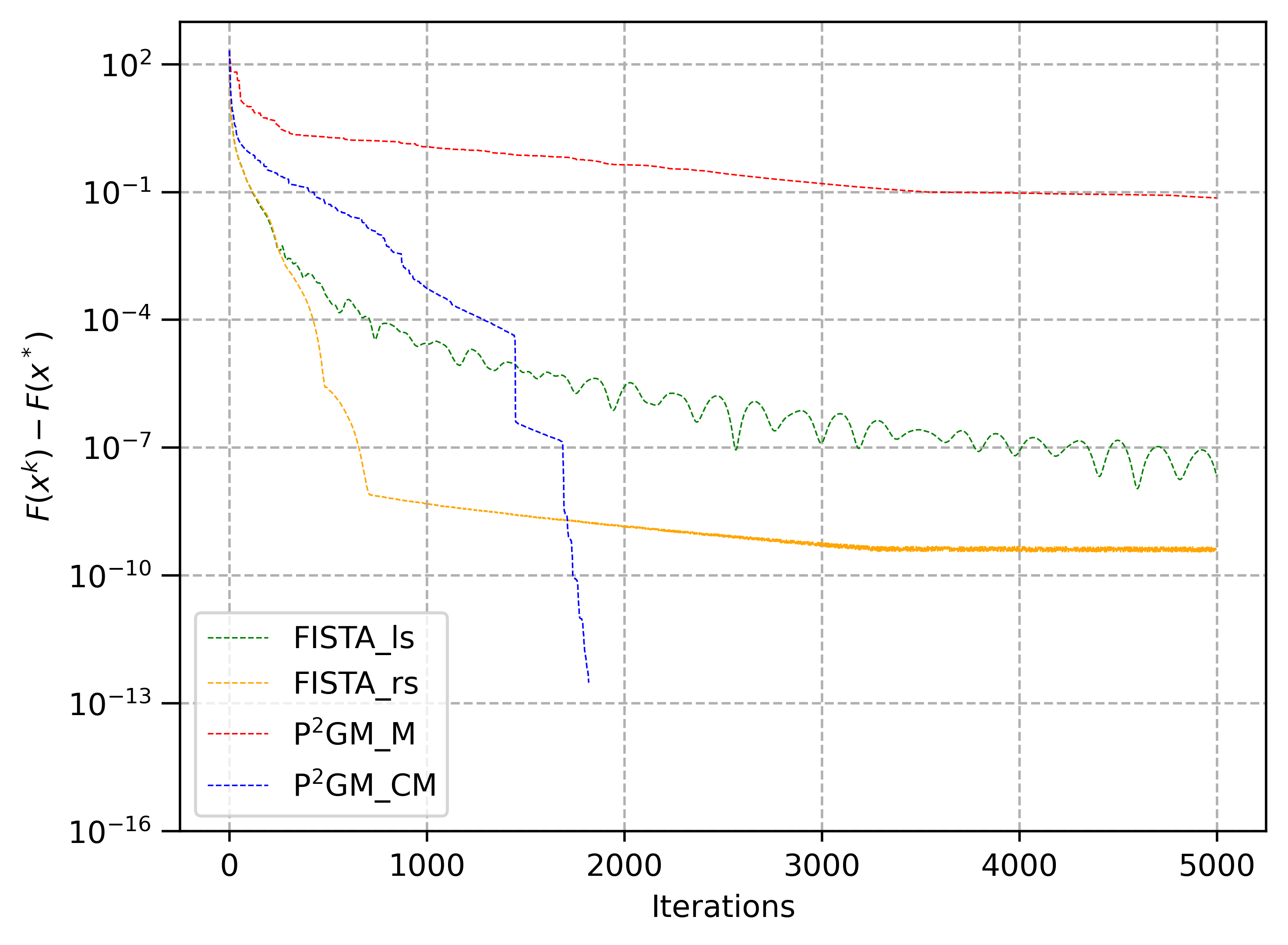} 
		\end{minipage}
	}
	\subfigure[CPU time]
	{
		\begin{minipage}[H]{.45\linewidth}
			\centering
			\includegraphics[scale=0.4]{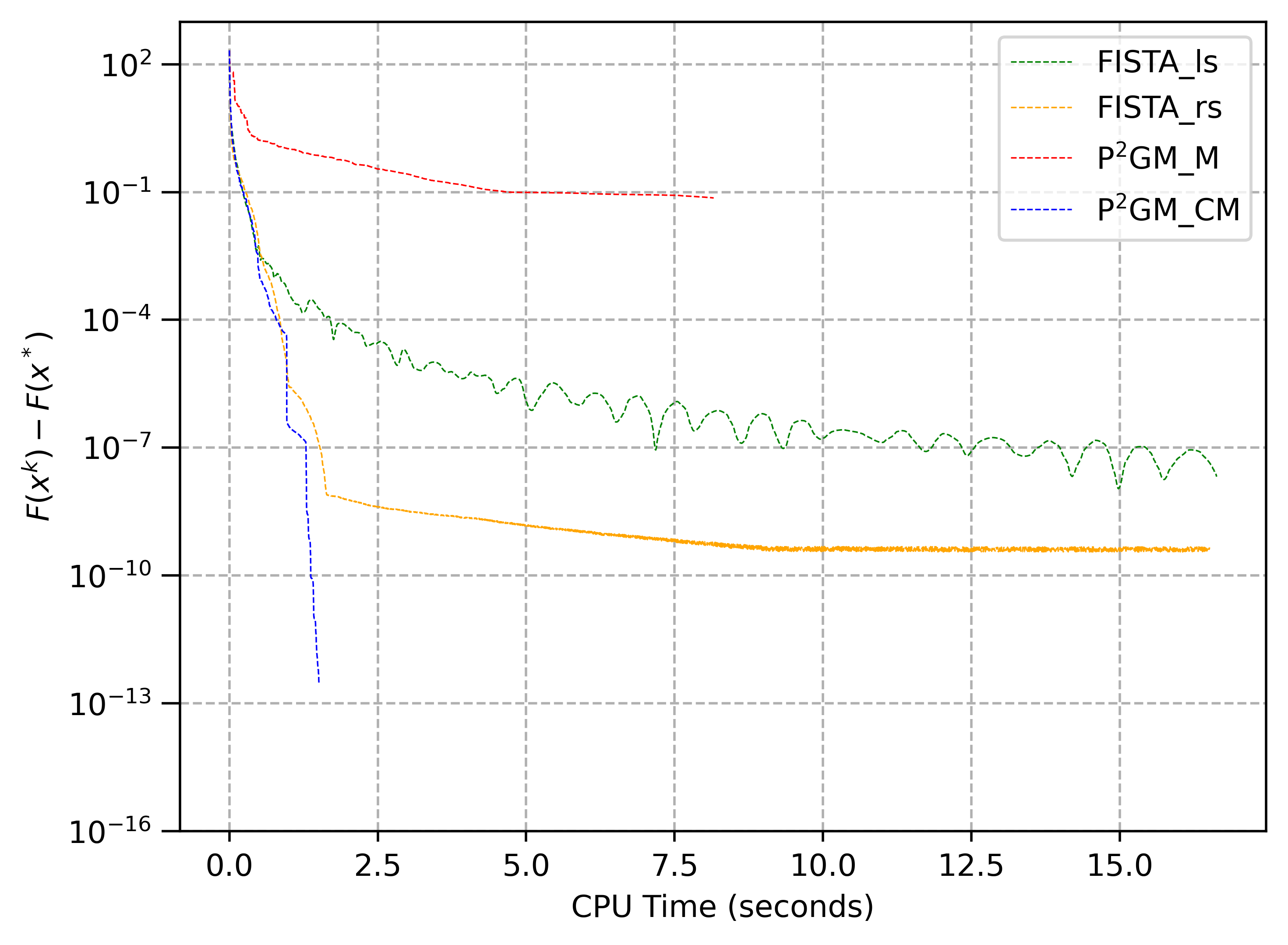} 
		\end{minipage}
	}
	\caption{Objective gaps w.r.t. iterations and CPU time for problem (\ref{qp}).}
	\label{f2}
\end{figure}

\subsection{Structured $\ell_{1}$ regularization quadratic problems}
Finally, we consider a structured composite optimization problem of the form
\begin{equation}\label{tv}
\min_{x\in\mathbb{R}^n} 
F(x) = f(x) + \lambda\|Ax\|_1,	
\end{equation}
where the smooth component is the quadratic function
\[
f(x) = \frac12 x^\top Q x + c^\top x .
\]
This problem combines an ill-conditioned quadratic objective with a structured linear operator in the nonsmooth term.

The problem parameters are generated as follows:

\begin{itemize}
	
	\item \textbf{Dimension:}
	$n = 100$, and the linear operator $A$ has $m=50$ rows.
	
	\item \textbf{Quadratic Term:}
	The matrix $Q\in\mathbb{S}_{++}^n$ is generated via spectral decomposition
	\[
	Q = U \Lambda U^\top,
	\]
	where $U$ is a random orthogonal matrix obtained from the QR factorization of a Gaussian matrix. The eigenvalues $\{\lambda_i\}$ are logarithmically spaced in the interval $[1,\kappa]$ with $\kappa = 5\times10^4$.
	
	\item \textbf{Linear Term:}
	The vector $c\in\mathbb{R}^n$ is sampled from a standard Gaussian distribution.
	
	\item \textbf{Linear Operator:}
	The matrix $A\in\mathbb{R}^{m\times n}$ is constructed via singular value decomposition
	\[
	A = U_A \Sigma V^\top,
	\]
	where $U_A$ and $V$ are random orthogonal matrices generated from Gaussian matrices via QR factorization. The singular values of $A$ are logarithmically spaced in $[1,\sigma_A]$ with $\sigma_A=\sqrt{5000}$.
	
	\item \textbf{Preconditioning Matrix:}
	To exploit the structure of $A$, we construct the preconditioning matrix
	\[
	P = V\left(\Sigma^\top\Sigma +
	\begin{bmatrix}
		\bm0_{m\times m}   & &  \\
		
		&  &\tilde{P}
	\end{bmatrix}\right)V^\top ,
	\]
where $\tilde P\in\mathbb{S}_{++}^{n-m}$ is chosen as the identity matrix. The inverse $P^{-1}$ can therefore be computed analytically from the block structure.
\end{itemize}

\begin{figure}[H]
	\centering
	\subfigure[Iterations]
	{
		\begin{minipage}[H]{.45\linewidth}
			\centering
			\includegraphics[scale=0.4]{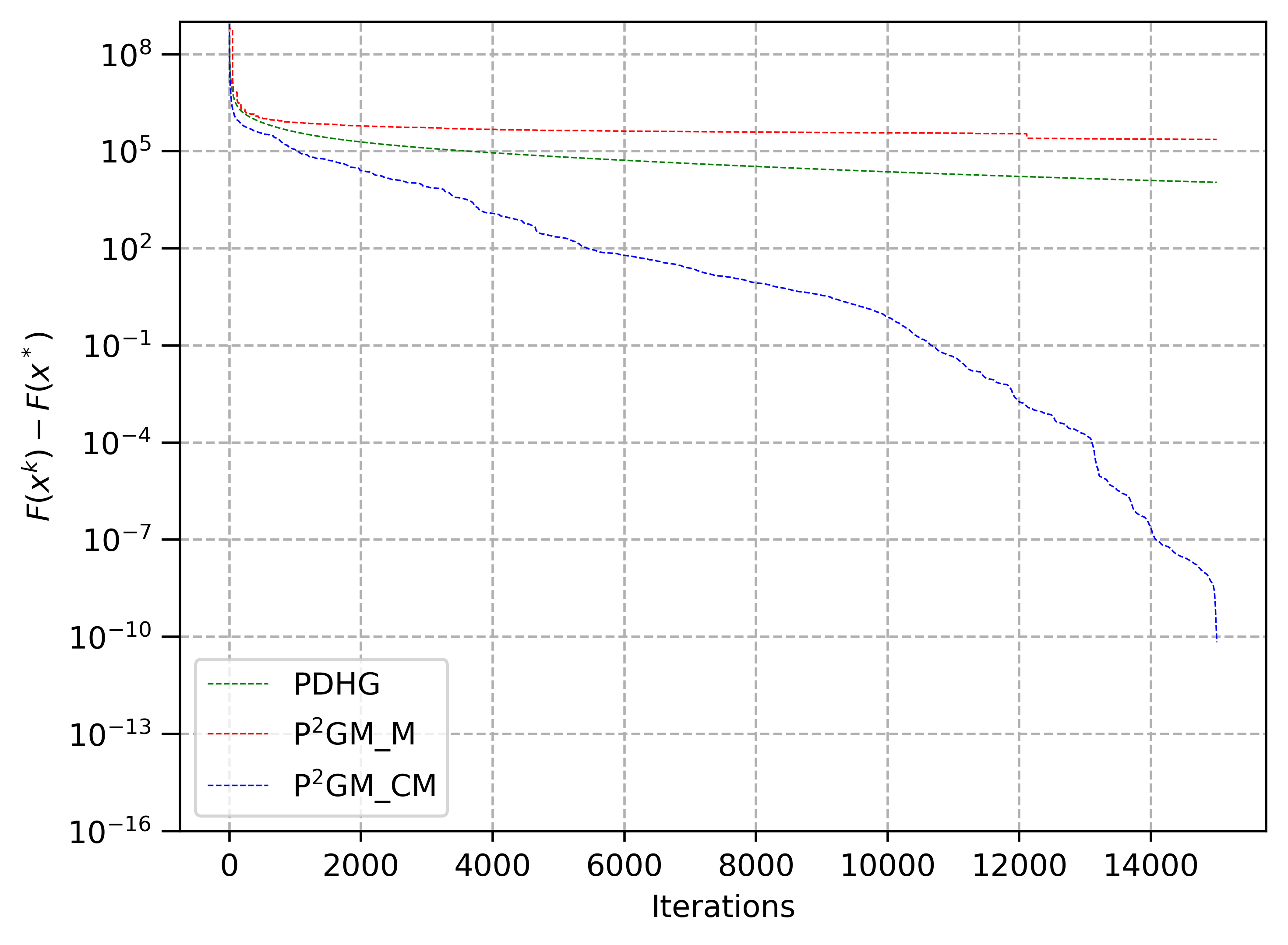} 
		\end{minipage}
	}
	\subfigure[CPU time]
	{
		\begin{minipage}[H]{.45\linewidth}
			\centering
			\includegraphics[scale=0.4]{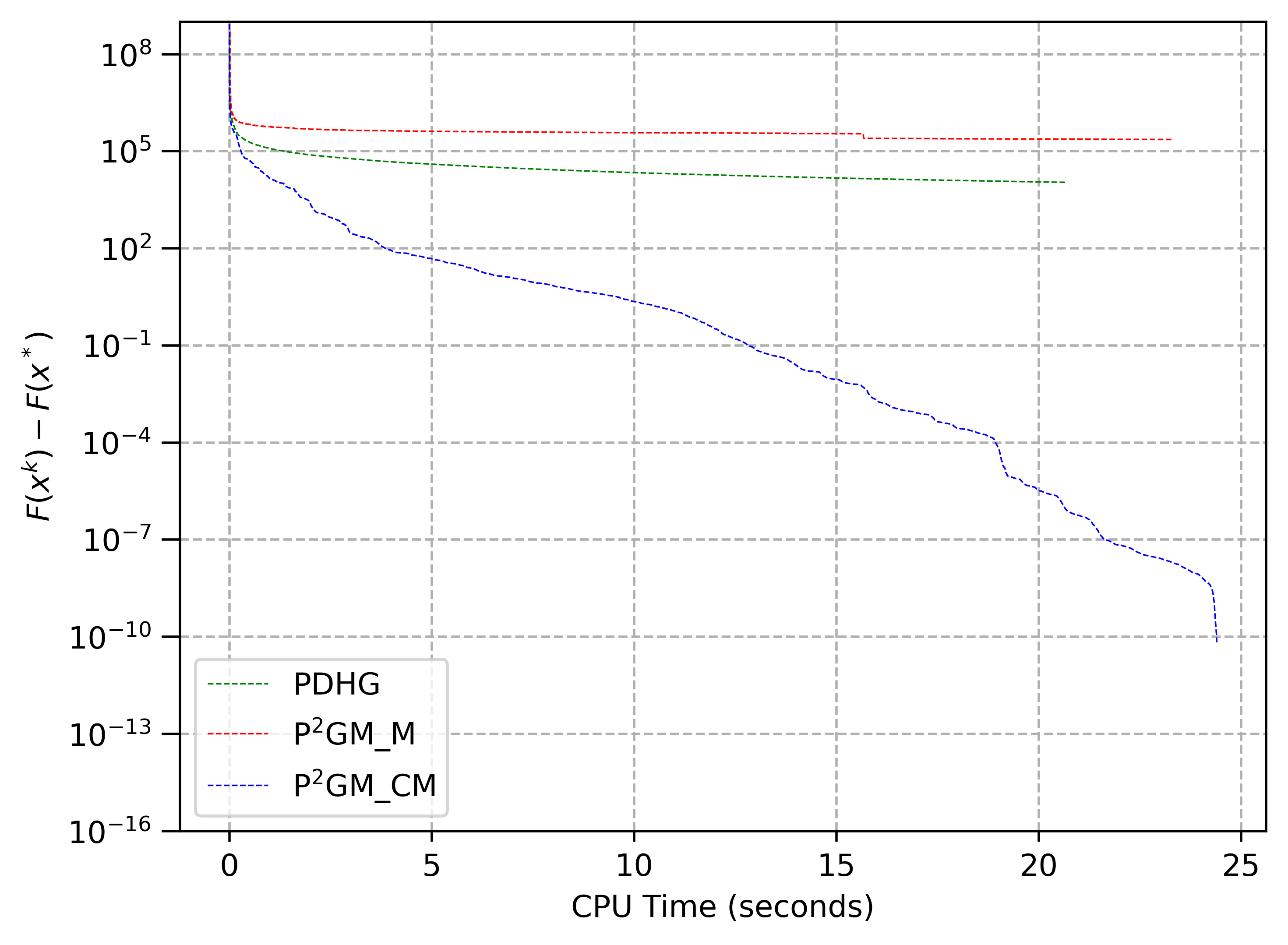} 
		\end{minipage}
	}
	\caption{Objective gaps w.r.t. iterations and CPU time for problem (\ref{tv}) with $\lambda=1/16$.}
	\label{f3}
\end{figure}
\subsection{Discussion of numerical results}
Figures \ref{f1}--\ref{f3} illustrate the convergence behavior of the tested algorithms on three representative problems: LASSO, simplex-constrained quadratic programming, and structured $\ell_{1}$ regularization problems. In all experiments, the proposed method $\mathrm{P^{2}GM}$\_CM consistently achieves the fastest decrease of the objective gap with respect to both iterations and CPU time.

A key observation from these figures is the clear performance gap between $\mathrm{P^{2}GM}$\_CM and its non-conjugate variant $\mathrm{P^{2}GM}$\_M. While both methods share the same preconditioning framework, the orthogonalized conjugate momentum used in $\mathrm{P^{2}GM}$\_CM significantly improves the efficiency of the subspace search, leading to much faster convergence. This suggests that incorporating curvature information through conjugate directions plays an essential role in accelerating the algorithm.

Moreover, compared with classical first-order methods such as FISTA and PDHG, the proposed approach demonstrates stronger robustness to ill-conditioning and maintains stable convergence across different problem structures. These results highlight the advantage of combining structure-exploiting preconditioning with conjugate momentum.
\section{Conclusions}\label{sec8}
In this paper we studied composite optimization problems involving linear operators and proposed a preconditioned proximal gradient framework with conjugate momentum from a subspace perspective. 
By exploiting the structure of the linear operator, we constructed a preconditioner that transforms the proximal gradient subproblem into a dual formulation whose solution admits closed-form expressions for several important classes of problems. 
This structure-driven preconditioning significantly simplifies the computation of proximal updates.

To further improve the convergence behavior, we introduced a subspace acceleration mechanism that incorporates curvature information through a proximal Newton model restricted to a low-dimensional subspace. 
By orthogonalizing the subspace basis with respect to the Hessian-induced inner product, the resulting two-dimensional nonsmooth problem can be efficiently approximated by a sequence of one-dimensional optimization problems. 
This design preserves computational efficiency while capturing useful second-order information.

We established global convergence of the proposed algorithm and proved a $Q$-linear convergence rate under standard smoothness and strong convexity assumptions. 
Numerical experiments on several representative applications, including LASSO problems, simplex-constrained quadratic programs, and structured $\ell_1$ regularization problems, demonstrated that the proposed method achieves competitive performance, especially on ill-conditioned problems.

Several directions for future research remain open. 
\begin{itemize}
	\item[$\bullet$] First, it would be of interest to extend the proposed framework to more general composite optimization problems in which the nonsmooth term may also be nonconvex. 
	Such an extension would broaden the applicability of the method to a wider class of modern machine learning and signal processing models.
	
	\item[$\bullet$] Second, inspired by the recent developments of the dimension-reduced second-order method \cite{ZGH2022}, it would be worthwhile to investigate the fast asymptotic convergence properties of the proposed algorithm. 
	In particular, understanding whether the subspace proximal framework can inherit superlinear or fast local convergence behavior remains an interesting theoretical question.
	
	\item[$\bullet$] Finally, our numerical experiments indicate that the orthogonalization of the momentum direction plays a crucial role in improving the practical performance of the algorithm. 
	This observation suggests a broader research direction: incorporating conjugate or orthogonalized momentum into other momentum-based optimization frameworks. 
	In particular, it would be interesting to explore whether similar ideas can be integrated into accelerated schemes such as Nesterov's method, potentially leading to new accelerated algorithms with improved robustness and convergence behavior.
\end{itemize}

\begin{acknowledgements}
 This work was funded by the National Key Research and Development Program of China [grant number 2023YFA1011504]; the Major Program of the National Natural Science Foundation of China [grant numbers 11991020, 11991024]; the Key Program of the National Natural Science Foundation of China [grant number 12431010]; the General Program of the National Natural Science Foundation of China [grant number 12171060]; NSFC-RGC (Hong Kong) Joint Research Program [grant number 12261160365]; the Team Project of Innovation Leading Talent in Chongqing [grant number CQYC20210309536]; the Natural Science Foundation of Chongqing [grant numbers ncamc2022-msxm01, CSTB2024NSCQ-LZX0140]; Major Project of Science and Technology Research Program of Chongqing Education Commission of China [grant number KJZD-M202300504]; the Chongqing Postdoctoral Research Project Special Grant [grant number 2024CQBSHTB1007];
 the Science and Technology Research Program of Chongqing Education Commission of China [grant number KJQN202400520] and Foundation of Chongqing Normal University [grant numbers 22XLB005, 22XLB006].

\end{acknowledgements}


\begin{thebibliography}{99}
\bibitem{ACFR2022}
H.~Attouch, Z.~Chbani, J.~Fadili, H.~Riahi.
\newblock First-order optimization algorithms via inertial systems with Hessian driven damping.
\newblock {Math. Program.}, 193:113--155, 2022.

\bibitem{BF2012}
S.~Becker, M.~J. Fadili.
\newblock A quasi-{N}ewton proximal splitting method.
\newblock In {Advances in Neural Information Processing Systems}, pages 2627--2635, 2012.

\bibitem{BT2009}
A.~Beck, M.~Teboulle.
\newblock A fast iterative shrinkage-thresholding algorithm for linear inverse problems.
\newblock {SIAM J. Imaging Sci.}, 2(1):183--202, 2009.

\bibitem{CP2011}
A.~Chambolle, T.~Pock.
\newblock A first-order primal-dual algorithm for convex problems with applications to imaging.
\newblock {J. Math. Imaging Vision}, 40:120--145, 2011.

\bibitem{CTY2025}
J.~Chen, L.~Tang, X.~M. Yang.
\newblock A subspace minimization Barzilai-Borwein method for multiobjective optimization problems.
\newblock {Comput. Optim. Appl.}, 92:155--178, 2025.

\bibitem{C2016}
L.~Condat.
\newblock Fast projection onto the simplex and the $\ell_1$ ball.
\newblock {Math. Program.}, 158:575--585, 2016.

\bibitem{DY1999}
Y.~Dai, Y.~Yuan.
\newblock A nonlinear conjugate gradient method with a strong global convergence property.
\newblock {SIAM J. Optim.}, 10(1):177--182, 1999.

\bibitem{FR1964}
R.~Fletcher, C.~Reeves.
\newblock Function minimization by conjugate gradients.
\newblock {Comput. J.}, 7(2):149--154, 1964.

\bibitem{HY2012}
B.~He, X.~Yuan.
\newblock Convergence analysis of primal-dual algorithms for a saddle-point problem: from contraction perspective.
\newblock {SIAM J. Imaging Sci.}, 5(1):119--149, 2012.

\bibitem{HS1952}
M.~R. Hestenes, E.~Stiefel.
\newblock Methods of conjugate gradients for solving linear systems.
\newblock {J. Res. Natl. Bur. Stand.}, 49(6):409--436, 1952.

\bibitem{LLLS2026}
M.~Lapucci, G.~Liuzzi, S.~Lucidi, M.~Sciandrone.
\newblock A globally convergent gradient method with momentum.
\newblock {Comput. Optim. Appl.}, 93:795--820, 2026.

\bibitem{LLL2026}
M.~Lapucci, G.~Liuzzi, S.~Lucidi, M.~Sciandrone, D.~Scuppa.
\newblock Projected gradient methods with momentum.
\newblock {arXiv preprint arXiv:2601.16683}, 2026.

\bibitem{LSS2014}
J.~Lee, Y.~Sun, M.~Saunders.
\newblock Proximal {N}ewton-type methods for minimizing composite functions.
\newblock {SIAM J. Optim.}, 24(3):1420--1443, 2014.

\bibitem{LC2022}
H.~Luo, L.~Chen.
\newblock From differential equation solvers to accelerated first-order methods for convex optimization.
\newblock {Math. Program.}, 195:735--781, 2022.

\bibitem{MP2018}
Y.~Malitsky, T.~Pock.
\newblock A first-order primal-dual algorithm with linesearch.
\newblock {SIAM J. Optim.}, 28(1):411--432, 2018.

\bibitem{N1983}
Y.~Nesterov.
\newblock A method for solving the convex programming problem with convergence rate $O(1/k^2)$.
\newblock {Soviet Math. Dokl.}, 27(2):372--376, 1983.

\bibitem{N2013}
Y.~Nesterov.
\newblock Gradient methods for minimizing composite objective function.
\newblock {Math. Program.}, 140:125--161, 2013.

\bibitem{OC2015}
B.~O'Donoghue, E.~Cand\`es.
\newblock Adaptive restart for accelerated gradient schemes.
\newblock {Found. Comput. Math.}, 15:715--732, 2015.

\bibitem{PDB2020}
Y.~Park, S.~Dhar, S.~Boyd, M.~Shah.
\newblock Variable metric proximal gradient method with diagonal {B}arzilai-{B}orwein stepsize.
\newblock In {ICASSP 2020 IEEE International Conference on Acoustics, Speech and Signal Processing}, 3597--3601, 2020.

\bibitem{P1964}
B.~T. Polyak.
\newblock Some methods of speeding up the convergence of iteration methods.
\newblock {USSR Comput. Math. Math. Phys.}, 4(5):1--17, 1964.

\bibitem{PR1969}
E.~Polak, G.~Ribi\`ere.
\newblock Note sur la convergence de m\'ethodes de directions conjugu\'ees.
\newblock {Rev. Fran\c{c}aise Inform. Rech. Op\'er.}, 3:35--43, 1969.

\bibitem{QGHY2025}
Z.~Qu, W.~Gao, O.~Hinder, Y.~Ye, Z.~Zhou.
\newblock Optimal diagonal preconditioning.
\newblock {Oper. Res.}, 73(3):1479--1495, 2025.

\bibitem{SGB2014}
K.~Scheinberg, D.~Goldfarb, X.~Bai.
\newblock Fast first-order methods for composite convex optimization with backtracking.
\newblock {Found. Comput. Math.}, 14:389--417, 2014.

\bibitem{ST2016}
K.~Scheinberg, X.~Tang.
\newblock Practical inexact proximal quasi-{N}ewton method with global complexity analysis.
\newblock {Math. Program.}, 160(1):495--529, 2016.

\bibitem{SDSJ2022}
B.~Shi, S.~ Du, W.~Su, M.~Jordan.
\newblock Understanding the acceleration phenomenon via high-resolution differential equations.
\newblock {Math. Program.}, 195:79--148, 2022.

\bibitem{SBC2016}
W.~Su, S.~Boyd, E.~Cand\`es.
\newblock A differential equation for modeling {N}esterov's accelerated gradient method: theory and insights.
\newblock {J. Mach. Learn. Res.}, 17(153):1--43, 2016.

\bibitem{YS1995}
Y.~ Yuan, J.~Stoer.
\newblock A subspace study on conjugate gradient algorithms.
\newblock {Z. Angew. Math. Mech.}, 75(1):69--77, 1995.

\bibitem{ZGH2022}
C.~Zhang, D.~Ge, C.~He, B.~Jiang, Y.~Jiang, Y.~Ye.
\newblock DRSOM: A dimension reduced second-order method.
\newblock {arXiv preprint arXiv:2208.00208}, 2022.
\end{thebibliography}
\end{document}